\colorlet{MyBlue}{DodgerBlue!60!Black}
\colorlet{MyGreen}{DarkGreen!85!Black}
\numberwithin{equation}{section}  
\crefname{app}{Appendix}{Appendices}
\newcommand{\debug}[1]{{\color{black}#1}}
\theoremstyle{plain}
\newtheorem{theorem}{Theorem}
\newtheorem{corollary}[theorem]{Corollary}
\newtheorem*{corollary*}{Corollary}
\newtheorem{lemma}[theorem]{Lemma}
\newtheorem{proposition}[theorem]{Proposition}
\theoremstyle{definition}
\newtheorem*{definition*}{Definition}
\newtheorem{assumption}[theorem]{Assumption}
\newtheorem*{hypothesis*}{Hypothesis}
\theoremstyle{remark}
\newtheorem{remark}[theorem]{Remark}
\newtheorem*{remark*}{Remark}
\newtheorem*{notation*}{Notational remark}
\numberwithin{theorem}{section}
\DeclarePairedDelimiter{\floor}{\lfloor}{\rfloor}
\DeclarePairedDelimiterX{\braket}[2]{\langle}{\rangle}{#1,#2}
\DeclarePairedDelimiterX{\inner}[2]{\langle}{\rangle}{#1,#2}
\DeclarePairedDelimiterX{\setdef}[2]{\{}{\}}{#1:#2}
\DeclarePairedDelimiterXPP{\probof}[1]{\Prob}{(}{)}{}{%

#1}
\DeclarePairedDelimiterXPP{\exof}[1]{\Expect}{[}{]}{}{%

#1}
\newcommand{\Expect}{\mathbf{\debug E}}
\newcommand{\Prob}{\mathbf{\debug P}}
\DeclareMathOperator{\supp}{\debug{supp}}
\newcommand{\ind}{\mathds{1}}
\newcommand{\cC}{\ensuremath{\mathcal C}}
\newcommand{\cF}{\ensuremath{\mathcal F}}
\newcommand{\E}{\ensuremath{\mathbb{E}}}
\newcommand{\N}{\ensuremath{\mathbb{N}}}
\renewcommand{\P}{\ensuremath{\mathbb{P}}}
\newacro{NE}{Nash equilibrium}
\newacro{PNE}{pure Nash equilibrium}
\newacro{MNE}{mixed Nash equilibrium}
\newacro{PFNE}{prior-free Nash equilibrium}
\newacro{WE}{Wardrop equilibrium}
\newacro{SO}{socially optimum}
\newacro{SU}{social utility}
\newacro{BEq}{best equilibrium}
\newacro{WEq}{worst equilibrium}
\newacro{KKT}{Karush\textendash Kuhn\textendash Tucker}
\newacro{OD}[O/D]{origin-destination}
\newacro{PoA}{price of anarchy}
\newacro{PoS}{price of stability}
\newacro{PoCS}{price of correlated stability}
\newacro{BPR}{bureau of public roads}
\newacro{FIP}{finite improvement property}
\newacro{CLT}{central limit theorem}
\newacro{BPG}{buck-passing game}
\newacro{SBPG}{stochastic buck-passing game}
\newacro{MBPG}{mixed extension of the buck-passing game}
\newcommand*{\figref}[2][]{%
  \hyperref[{fig:#2}]{%
    Figure~\ref*{fig:#2}%
    \ifx\\#1\\%
    \else
      \,#1%
    \fi
  }%
}
\begin{document}
\title {Evolution of discordant edges in the voter model on random sparse digraphs}
%

 \author[F.~Capannoli]{Federico Capannoli$^{}$}
 \address{$^{}$Mathematical Institute, Leiden University, Gorlaeus Gebouw, BW-vleugel, Einsteinweg 55, 2333 CC Leiden, The Netherlands}

\email{f.capannoli@math.leidenuniv.it}

\keywords{Random directed graphs, voter model, meeting times, coalescent random walks.}

\maketitle          

\begin{abstract}

We explore the voter model dynamics on a directed random graph model ensemble (digraphs), given by the Directed Configuration Model. The voter model captures the evolution of opinions over time on a graph where each vertex represents an individual holding a binary opinion. Our primary interest lies in the density of discordant edges, defined as the fraction of edges connecting vertices with different opinions, and its asymptotic behavior as the graph size grows to infinity. This analysis provides valuable insights, not only into the consensus time behavior but also into how the process approaches this absorption time on shorter time scales.
Our analysis is based on the study of certain annealed random walk processes evolving on out-directed, marked Galton-Watson trees, which describe the locally tree-like nature of the considered random graph model. Additionally, we employ innovative coupling techniques that exploit the classical stochastic dual process of coalescing random walks. We extend existing results on random regular graphs to the more general setting of heterogeneous and directed configurations, highlighting the role of graph topology in the opinion dynamics.

\end{abstract}

\section{Introduction}

In recent times, new tools have been developed that make it possible to analyze interacting particle systems (IPSs) evolving on large finite graphs, enriching the pre-existing infinite volume literature, see \cite{Lig85}, developed since the late 70s. In this finite volume setting, the voter model is one of the most studied IPSs, mainly due to its stochastic dual process that translates its analysis into questions about coalescing random walks. The classical two-opinions voter model was introduced in \cite{CS73}, then it was further studied in \cite{HL75} on the lattice and in \cite{Cox89} on the torus, and it represents one of the simplest opinion dynamics evolution that can be modeled via a Markov process. As many other IPSs, the voter model can be seen as a simplified model to understand some behaviours of social or real life networks. Therefore, it became natural to generalise it on random graphs. The voter model and coalescing random walks on finite graphs, mainly studied via the consensus and coalescence times, were recently investigated on random graphs, see e.g. \cite{CFR09}, \cite{FO22}, \cite{HSDL22}, \cite{ACHQ23} and \cite{vBK24}. We are interested in a directed random graph model, called directed configuration model (DCM). The DCM is a random graph model in which every vertex have a prescribed in- and out-degree and the randomness is specified by the edge set. It resembles the directed version of the configuration model introduced in \cite{Bol80}, while the DCM was first studied in \cite{CF04}.  Recently there has been an increasing interest in studying the topological properties of this directed ensemble (see e.g. \cite{vdHOC18}, \cite{CP21}, \cite{vdHP24}) as well as the evolution of stochastic processes over such random geometry (see e.g. \cite{BCS18}, \cite{CQ20}, \cite{CQ21a}, \cite{CCPQ21}, \cite{QS23}.

We can informally describe the voter dynamics as follows. On a given locally finite graph each vertex, individual, has an initial mark, opinion, usually denoted by 0 or 1. After waiting a random amount of time given by a collection of rate-one Poisson clocks, an individual changes its opinion by adopting the one of a randomly chosen neighbour. Naturally in this setting, the main object of interest is the distribution of the so-called \emph{consensus time}, i.e., the first time at which all the vertices share the same opinion. Voter model on DCM was first studied by \cite{ACHQ23} for a general class of in- and out-degree sequences. The authors showed the precise first-order asymptotic of the consensus time after a proper scaling. In particular the expected consensus time scales linearly in the size of the graph, with an explicit pre-constant that depends on the heterogeneity of the degrees.

In this paper we conduct a more detailed study on the voter model on DCM, which would simultaneously provide all the information about the consensus time and the process by which it was reached. In particular, we examine the evolution of the density of \emph{discordant edges}, that is, the ratio of edges that have different opinion. It is an interesting object to study as it resembles a richer observable w.r.t. the consensus time, and describes exactly the perimeter of the set of vertices with one of the two opinions. A crucial work in the rigours literature that depicts a complete analysis of the discordant edges behaviour is \cite{ABHHQ22} on the regular random graphs. There, the authors show that the expected fraction of discordant edges has a interesting interplay with the geometry of the graph. In particular, the choice of the time scale with respect to the size of the network will show drastically different behaviours of the process. Such work although was restricted to the, symmetric (undirected) and degree homogeneous, regular case.

\subsection*{Our contribution} Adapting the basic approach of \cite{ACHQ23} in our setting, we show that it is possible to get the explicit behaviour of the expected fraction of discordant edges on the sparse DCM. More precisely, we are able to prove that a similar behaviour as seen in the random regular case is preserved, i.e. the process first drops to a constant plateau, then it stabilizes in this metastable state for a long time and then finally, when the time scale is of the same order of the consensus time, it approaches zero. On the other hand some interesting outcomes arose from our analysis. Unlike the random regular case, we observe a different behaviour regarding the explicit function leading the first-order asymptotic for short and moderate time scales. This is due to the directed nature and the inhomogeneity of the underlying geometry. The new explicit pre-constant turns out to be an uniformly bounded function of the degree sequence that depends only on the average degree and a spectral quantity that governs the homogeneity of the in- and out-degrees. Moreover, it is worth to point out that this work gives a contribution to the literature of IPSs on random directed graphs, that is still far from being completely understood. As a consequence of duality, we will see that the proof depends on studying properties of random walks on random environment. We look at joint law of the process and the graph dynamics together. Such analysis was possible thanks to innovative annealing techniques used in \cite{BCS18}, \cite{CCPQ21} and \cite[Section 6.1]{ACHQ23}.

\subsection*{Outline} In Section \ref{sec: Models and results} we define formally the voter model together with the directed configuration model random graph, and eventually we state our main result. In Section \ref{sec:Preliminaries} we will introduce the consensus time for the voter model and how it relates to the discordant edges, together with a crucial tool for our analysis, given by the dual system of coalescing random walks. Then we briefly describe the directed random environment of interest, showing that its local geometry can be well-approximated by a Galton-Watson tree. Finally, we define the observable related to the random walks evolving on the DCM and its relation to the so-called annealed random walk. In Section \ref{sec: Proof of the main result} we give a complete proof of the main result. First we prove the result for short time scales, and afterwords we extend it to any time scales.

\section{Models and results} \label{sec: Models and results}

In this section we formally introduce the two model of interest for this paper: the voter model on directed graphs and the directed configuration model. After that we state our main result, describing the asymptotic behaviour of the expected fraction of discordant edges on a typical realisation of the random environment.

\subsection{Voter model}\label{sec:voter model}
Given a directed, strongly connected graph $G=(V,E)$, we define the voter model on $G$ as the continuous-time Markov process $(\eta_t)_{t\geq 0}$ with state space $\{0,1\}^V$ and infinitesimal generator $\mathcal{L}_{vm}$ as
\begin{equation*}
    (\mathcal{L}_{vm}f)(\eta) = \sum_{x\in V} \sum_{\substack{y\in V : \\ (x,y)\in E}} \frac{1}{d_x^+} \big(f(\eta^{x\to y}) - f(\eta)\big), \quad f:\{0,1\}^V \to \mathbb{R}\,,
\end{equation*}
where $(x,y)\in E$ denotes a directed edge exiting $x$ and entering $y$, $d_x^+ = |\{z\in V \mid (x,z)\in E\}|$ 
is the out-degree of $x$ and
\begin{equation*}
    \eta^{x\to y} (z) = 
    \begin{cases}
        \eta(y),& \quad \text{if } z=x\,,\\
        \eta(x),& \quad \text{otherwise}\,. \\
    \end{cases}
\end{equation*}
For any $u\in[0,1]$, let $\mathbf{P}_u$ be the law of the voter model $(\eta_t)_{t\geq 0}$ with initial distribution $\eta_0 = \text{Bern}(u)^{\otimes V}$, and $\mathbf{E}_u$ its expectation.
Sometimes we may adopt the equivalent notation $x\to y$ in place of $(x,y)$ to emphasise the fact that the edge is directed from $x$ to $y$. For any $x\in V$ and $t\in \mathbb{R}^+$, $\eta_t(x)$ represents the state of node $x$ at time $t$ in terms of the binary state $\{0,1\}$, to be interpreted as the opinion of the individual $x$ at time $t$. In other words, the process captures the evolution of the opinion dynamics starting from the initial configuration of opinions given by $\eta_0 = \{\eta_0(x) \mid x\in V\}$. The Markovian evolution defined by the generator $\mathcal{L}$ can be described as follows. Give to each directed edge $(x,y)$ an exponential clock of rate $1/d_x^+$. When the clock associated to an edge $x\to y$ rings, vertex $x$ adopts the opinion of vertex $y$. Similarly to other interacting particle systems, such description gives rise to the so-called \emph{graphical representation} for the voter model. We refer to \cite{Lig85} and \cite{Lig99} for all the details concerning the matter.
Notice that such Markov process has two absorbing states, corresponding to the monochromatic configurations $\bar{0}$ and $\bar{1}$ consisting of all $0$'s and $1$'s, respectively. If we assume $G$ to be finite, then it can be shown that almost surely the process will reach one of the two absorbing states in finite time. This setting naturally leads to the question of determining the time such that the system reaches the absorbing states, called \emph{consensus time}, and defined as
\begin{equation}
    \tau_{\rm cons} = \inf\{t\geq0 : \eta_t \in \{\bar{0}, \bar{1}\}\}\,.
\end{equation}
In the literature this hitting time was deeply studied in a wide variety of underlying random and non-random, finite and infinite volume geometries. Knowing the behaviour of the consensus time is generally a difficult task that strongly depends on the graph structure. Additionally, it is not very informative regarding the evolution of the process, as it lacks information about how the opinion dynamics led to such a consensus. It is possible to perform a different, more detailed study on the voter model from which, at the same time, we can derive all the information regarding the consensus time and how did the process reach it. In the present paper we analyse the evolution of the density of \emph{discordant edges}. More precisely, let us denote the set of discordant edges at time $t$ as
$$
D_t = D_t^{(n)} = \{e=(x,y)\in E \;:\; \eta_t(x)\neq \eta_t(y)\}\,.
$$
Therefore, we set the density of discordant edges at time $t$ to be
\begin{equation} \label{eq:discordant density}
   \mathcal{D}_t =\mathcal{D}_t^{(n)} = \frac{|D_t|}{|E|}\,. 
\end{equation}
The aim of this paper is to study the asymptotic evolution of the latter quantity, as the size of the underlying graph grows to infinity. The evolution of (the fraction of) discordant edges has a proper interest as it exactly captures the way in which the two opinion compete before reaching consensus. Furthermore, as shown in \cite{CCC16}, there is an interesting interplay between the Fisher-Wright diffusion, seen as scaling limit of the fraction of, say, blue opinions, and the scaling limit of the fraction of discordant edges \eqref{eq:discordant density}.

\subsection{Directed configuration model} \label{sec:DCM}
For any $n\in\mathbb{N}$, let $[n]:= \{1,\dots,n\}$ be a set of $n$ labeled nodes. For any vertex $x\in[n]$, let $d_x^+$ (resp. $d_x^-$) be its out-degree (resp. in-degree), that is the number of vertices that are connected to $x$ via a directed edge that is exiting (resp. entering) $x$. Define $\mathbf{d}_n = ((d_1^-,d_1^+),\dots,(d_n^-,d_n^+))$ to be a deterministic bi-degree sequence with the following constraint
\begin{equation} \label{graphical}
    m = m_n := \sum_{x\in[n]}d_x^- = \sum_{x\in[n]}d_x^+\,.
\end{equation}
The randomness of the model comes from the mechanism in which the edges are formed. This is a result of the following uniform pairing procedure. Assign to each vertex $x\in[n]$, $d_x^+$ labeled \emph{heads} and $d_x^-$ labeled \emph{tails}, denoting the in- and out-stubs of $x$ respectively. At each step, select a tail $e$ that was not matched in a previous step, and a uniform at random head $f$ among the unmatched ones, then match them and add the directed edge $ef$ between the vertex incident to $e$ and the one incident to $f$ to the edge set $E$. Continue until there are no more unmatched heads and tails. Note that the constraint in \eqref{graphical} ensures that such uniform matching ends without any stub left unmatched. This random procedure gives rise to a so-called configuration, and it uniquely determines the corresponding random, directed graph $G=G_n=([n],E)$. We say that a graph $G_n$ is sampled from the Directed Configuration Model DCM $=$ DCM($\mathbf{d}_n$) with a given degree sequence $\mathbf{d}_n$, if it is sampled according to the procedure above. We denote by $\P$ the law of $G$. We will be interested in studying the asymptotic regime in which $n\to\infty$, and we will say that $G$ has a certain property $E_n$ \emph{with high probability (w.h.p.)}, if
$$ 
\P(E_n) \to 1\,,  \text{ as } n\to\infty\,.
$$
Let $d^\pm_{\rm max} = \max_{x\in[n]}d_x^\pm$ and $d^\pm_{\rm min} = \min_{x\in[n]}d_x^\pm$.  We will consider the following assumptions over $\mathbf{d}_n$:
\begin{assumption} \label{deg-assumptionn} There exist some constants $C,C' \geq 2$ such that for any $n\in \mathbb{N}$
\begin{center}
    \begin{align*}
        (1)& \quad d_{\rm min}^\pm \geq 2\,, \\
        (2)& \quad d_{\rm max}^+ \leq C\,, \\
        (3)& \quad d_{\rm max}^- \leq C'\,.
    \end{align*}  
\end{center}
\end{assumption}
Under the assumption (1) it holds that the resulting graph realisation will be strongly connected with high probability (\cite{CF04}), while assumptions (2) and (3) guarantee that the graph is \emph{sparse}, in the sense that the number of edges grows at most linearly in the size of the graph, i.e. $m = \mathcal{O}(n)$.

\subsection{Main result}
Before stating the main result, we introduce some relevant functions of the degree sequence of the DCM.
Let
\begin{equation}\label{eq:def-rho-gamma-delta-beta}
	\begin{split}
\delta&=\delta_n\coloneqq\frac{m}{n}\,,\qquad\quad\quad\quad\:\:\:\beta=\beta_n\coloneqq\frac1{m}\sum_{x\in[n]}(d_x^-)^2\,, \\
\rho&=\rho_n\coloneqq\frac{1}{m}\sum_{x\in[n]}\frac{d_x^-}{d^+_x}\,, \quad\quad\gamma=\gamma_n\coloneqq\frac{1}{m}\sum_{x\in[n]}\,\frac{(d^-_x)^2}{d^+_x}\,,
 \end{split}
\end{equation}
where $m$ as in \eqref{graphical}. Notice that, under Assumptions \ref{deg-assumptionn}, all the above quantities are $\Theta(1)$ and bounded away from zero. Moreover, define
\begin{equation}\label{theta}
    \vartheta=\vartheta_n( \mathbf{d}^+, \mathbf{d}^-)\coloneqq \frac{\delta}{\frac{\gamma-\rho}{1-\rho}\, \frac{1-\sqrt{1-\rho}}{\rho}+\beta-1}\,.
\end{equation}

The following represents the main contribution of this paper. It gives a complete picture of the asymptotic behaviour of the expected fraction of discordant edges for the voter model with high probability with respect to the law of the DCM.

\begin{theorem} \label{thm:main}
    Suppose that the degree sequence satisfies Assumption \ref{deg-assumptionn}. Fix $u\in(0,1)$ and let $n\in\mathbb{N}$. Consider the voter model on the directed configuration model $G_n=([n],E)$ with initial distribution $\text{Bern}(u)^{\otimes [n]}$. Then, for any non-negative sequence $t_n$ such that $\lim_{n\to \infty}t_n$ and $\lim_{n\to \infty}t_n/n$ exist,

it holds that 
\begin{equation} \label{eq:main}
\bigg|\mathbf{E}_u[\mathcal{D}_{t_n}] - 2u(1-u)\,\varphi(t_n)\,e^{-2\frac{t_n}{n}\vartheta^{-1}}\bigg| \overset{\P}{\longrightarrow}0\,,
\end{equation}
where 
\begin{equation}\label{eq:phi}
\varphi(t) = 1 - \frac{1}{2\,\delta} \sum_{k\geq 0} e^{-2t} \frac{(2t)^k}{k!}\Bigg(\sum_{s=1}^{\floor{\frac{k-1}{2}}} 2^{-2s}\,C_s\,\rho^s\,\mathds{1}_{k>2} +\mathds{1}_{k>0}\Bigg),\quad   t\geq 0,    
\end{equation}
$\vartheta$ as in \eqref{theta}, $\rho$ and $\delta$ as in \eqref{eq:def-rho-gamma-delta-beta},  and $C_s$ denote the Catalan numbers, i.e.
$$
\quad C_s = \frac{1}{s+1}\binom{2s}{s}\,.
$$
\end{theorem}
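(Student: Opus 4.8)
The plan is to combine voter-model/coalescing-random-walk duality with the local weak convergence of the DCM to a marked Galton--Watson tree, and then to disentangle a short-time (\emph{local chase}) contribution from a long-time (\emph{global meeting}) one. \emph{Step 1 (reduction via duality).} By the graphical representation of the voter model (Section~\ref{sec:Preliminaries}), for each directed edge $e=(x,y)\in E$ one has $\eta_t(x)=\eta_0(\zeta^x_t)$ and $\eta_t(y)=\eta_0(\zeta^y_t)$, where $\zeta^x,\zeta^y$ are coalescing random walks following \emph{out}-edges (total jump rate $1$, next vertex uniform among out-neighbours) started at $x$ and $y$. Since $\eta_0$ is a product $\text{Bern}(u)$ measure, if the two walks have met by time $t$ the endpoints carry the same opinion, and otherwise they sit at two distinct vertices with independent $\text{Bern}(u)$ marks, whence
\[
\mathbf{P}_u\big(\eta_t(x)\neq\eta_t(y)\big)=2u(1-u)\,\mathbf{P}\big(M_{x,y}>t\big),
\]
with $M_{x,y}$ the first meeting time of two independent such walks. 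Averaging over $e$ gives $\mathbf{E}_u[\mathcal{D}_t]=2u(1-u)\,|E|^{-1}\sum_{(x,y)\in E}\mathbf{P}(M_{x,y}>t)$, so everything reduces to the law of $M_{x,y}$ for a uniformly chosen directed edge.

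\emph{Step 2 (the local chase and the function $\varphi$).} By the locally tree-like structure of the DCM and the annealing technique of \cite{BCS18,CCPQ21} and \cite[Section 6.1]{ACHQ23}, the pair $(\zeta^x,\zeta^y)$ together with its environment converges, in the annealed sense, to two independent rate-$1$ out-walks on an out-directed marked Galton--Watson tree rooted at the edge $x\to y$. On such a tree an out-walk never revisits a vertex, so the two walks can meet only through a \emph{chase}: while $\zeta^y$ is still a descendant of $\zeta^x$, write $g\ge1$ for the length of the directed path from $\zeta^x$ to $\zeta^y$; a jump of $\zeta^y$ sends $g\mapsto g+1$, a jump of $\zeta^x$ sends $g\mapsto g-1$ if it steps along the path toward $\zeta^y$ (probability $1/d^+_v$ at the current vertex $v$) and otherwise precludes a meeting forever, and a meeting occurs exactly when a jump of $\zeta^x$ takes $g$ from $1$ to $0$. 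The admissible jump-sequences of length $2a+1$ are the first-passage lattice paths from $g=1$ to $g=0$, exactly $C_a$ in number; attaching the annealed step probabilities --- $1/\delta$ for the \emph{first} jump of $\zeta^x$ (the tail of a uniform edge has law $d^+_x/m$, so $1/d^+_x$ averages to $\tfrac1m\sum_x 1=1/\delta$) and $\rho$ for each later correct jump of $\zeta^x$ (every later relevant vertex is reached along an edge, hence has law $d^-_x/m$, so $1/d^+_x$ averages to $\rho$) --- gives $\mathbf{P}(\text{tree-meeting at the }(2a+1)\text{-th jump})=C_a\,2^{-(2a+1)}\rho^a/\delta$. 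As the joint jump process is Poisson of rate $2$ and the jump times are independent of the which-walk-and-where choices, conditioning on the number $k$ of jumps by time $t$ and swapping the double sum produces exactly $\mathbf{P}(M^{\mathrm{tree}}>t)=\varphi(t)$ with $\varphi$ as in \eqref{eq:phi}; in particular $\varphi(\infty)=1-\tfrac{1-\sqrt{1-\rho}}{\delta\rho}\in[\tfrac12,1)$, the metastable plateau value.

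\emph{Step 3 (transfer to the graph and all time scales).} For bounded or slowly diverging $t_n$, couple a neighbourhood of $e$ with the Galton--Watson tree (valid w.h.p.) and control the average over edges by a second-moment estimate, using that the dual-walk pairs attached to two far-apart edges are asymptotically independent; this promotes the annealed identity to $|E|^{-1}\sum_{(x,y)}\mathbf{P}(M_{x,y}>t_n)\overset{\P}{\longrightarrow}\varphi(\lim t_n)$, which is \eqref{eq:main} when $\lim t_n/n=0$ (with $\varphi(\infty)$ in the plateau case $t_n\to\infty$, $t_n/n\to0$). For $t_n=\Theta(n)$, split $M_{x,y}$ at an intermediate time $s_n\to\infty$ with $s_n=o(\log n)$: with probability $\varphi(s_n)\to\varphi(\infty)$ the chase has failed by then, after which the two walks reach the stationary regime within $O(\log n)$ further steps (without meeting, w.h.p., since the graph mixes in $O(\log n)$ and $\log n/n\to0$), and from there the meeting time is asymptotically exponential of rate $2\vartheta_n^{-1}/n$ --- this is precisely the stationary meeting-time estimate underpinning the consensus-time theorem of \cite{ACHQ23}, the constant $\vartheta_n$ of \eqref{theta} being read off from the stationary collision rate, its factor $\tfrac{1-\sqrt{1-\rho}}{\rho}$ being the Catalan generating function $\sum_{a\ge0}C_a(\rho/4)^a$ at the relevant argument and $\gamma-\rho,\ \beta-1$ encoding the in-degree-biased branching of the two walks. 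Multiplying the two phases gives $\mathbf{P}(M_{x,y}>t_n)=\varphi(t_n)\,e^{-2t_n\vartheta_n^{-1}/n}(1+o(1))$, and averaging over $e$ with the same concentration yields \eqref{eq:main}.

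\emph{Main obstacle.} The crux is Step 3: showing that the local and global meeting mechanisms genuinely \emph{multiply}, with errors $o(1)$ \emph{uniformly} in the time scale. This needs the annealed tree coupling pushed to times diverging with $n$, together with the stationary meeting-time asymptotics of \cite{ACHQ23} sharp enough to retain the first-jump correction (rate $1/\delta$ rather than $\rho$) that distinguishes a uniformly random edge from a stationary start. By contrast, the annealed-to-quenched passage is comparatively routine, via a second-moment bound exploiting the asymptotic independence of the dual-walk trajectories issued from two independent uniform edges.
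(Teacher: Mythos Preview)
Your proposal is correct and follows essentially the same approach as the paper: duality to reduce to edge-indexed meeting times, annealed computation of the local chase on the Galton--Watson tree via Dyck-path/Catalan-number enumeration yielding $\varphi$, second-moment concentration for the quenched statement, and a decomposition at the deviation time combined with the stationary-meeting exponential law from \cite{ACHQ23} for the long-time regime. The only cosmetic difference is that the paper conditions on the specific edge $(x,y)$ before averaging (obtaining $\tfrac{1}{d^+_x}\tfrac{1}{d^+_y}\rho^{s-1}$ and then summing against $d^+_x d^-_y/m$), whereas you average over the edge distribution from the outset to get $\rho^a/\delta$ directly; the paper also splits at the random time $\tau_{\rm dev}$ rather than a deterministic $s_n$, but this is immaterial since it proves $\tau_{\rm dev}\le h_\star$ w.h.p.
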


\begin{remark}
We can immediately check that the series in \eqref{eq:phi} is converging uniformly in $t$ to

\begin{equation}\label{eq:limiting value phi}
    \varphi(\infty)= \varphi_n(\infty) =1-\frac{1-\sqrt{1-\rho}}{\delta\,\rho}\,.
\end{equation}
Furthermore, as it will be shown in the proof of Proposition \ref{prop:first moment short time scales}, if $t=t_n$ is a diverging sequence then $\varphi(t)$ is close to $\varphi(\infty)$ as $n\to \infty$.
\end{remark}
Similarly to what the authors proved in \cite{ABHHQ22}, from the expression in \eqref{eq:main} we observe that there are four different time scales for the evolution of the voter model that show different behaviours of the expected density of discordant edges.
\begin{enumerate}
    \item \textit{Short time scale}. If $t_n$ is of order one, i.e. $t_n=t=\Theta(1)$, then the exponential factor doesn't play any role and the leading term is given by the function $2u(1-u)\varphi(t)$. This is related to the event that two coalescing annealed walks evolving on a out-directed, multi-type Galton-Watson tree do not meet and chase each other on the same branch of the tree within time $t$.
    \item \textit{Intermediate time scale}. If $t_n$ diverges slowly, i.e. it is such that $\lim_{n\to \infty} t_n = \infty$ and $t_n = o(n)$, then as $n\to\infty$ the density of discordances stabilises around the limiting value $2u(1-u)\varphi(\infty)$ as in \eqref{eq:limiting value phi}. The choice of such a range for intermediate time scales is due to Theorem \ref{thm:meeting from stationarity}, which indicates that the consensus time for the voter model in the sparse DCM is linear in $n$.
    \item \textit{Long time scale}. If $t_n$ is of the consensus time order, i.e. $t_n = \ell\,n$ with $\ell\in (0,\infty)$, then the voter model is approaching consensus and the exponential factor in \eqref{eq:main} becomes relevant. In terms of the discordant edges behaviour this reflects into a drastic descent from the previous plateau, approaching zero. See also Figure \ref{fig:simulation discordant edges}.
    \item \textit{Consensus}. If $t_n$ exceeds the consensus time scale, i.e. $\lim_{n\to\infty} t_n/n =\infty$, then the expression in \eqref{eq:main} vanishes.
\end{enumerate}

\begin{figure}
    \centering
    \includegraphics[scale=0.5]{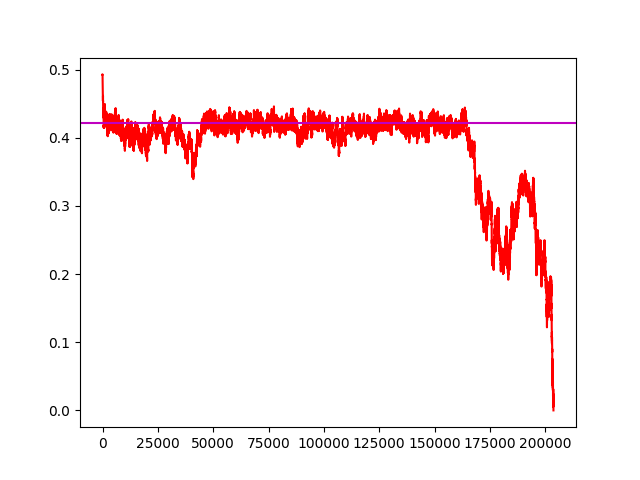}
    \caption{A single simulation of the voter model with $u=1/2$ on a quenched realization of DCM with $n=1000$ vertices and (in/out) degrees ranging between 2 and 5. In red, the fraction of discordant edges; in magenta, the constant line having value $2u(1-u)\varphi(\infty)$. As predicted, the phenomenology consists of an initial drop to the magenta line, then, for the intermediate time scales, a metastable behaviour stabilized around $2u(1-u)\varphi(\infty)$, then finally a rapid drop to zero on the consensus scale.}
    \label{fig:simulation discordant edges}
\end{figure}

\begin{remark}[Variability of degree sequence]
 Recall first that the authors in \cite{ACHQ23} showed that from the expression $\vartheta$ in \eqref{theta} one could retrieve relevant information about how fast does the consensus time happen depending on the regularity/variability of the degree sequence. We can get a similar information on some time scales exploiting such a fact.
 \begin{itemize}
    \item Consider the special cases in which the graph is \emph{out-regular}, i.e. for a fixed $d\geq 2$ the degree sequence $\mathbf{d}$ is such that $d^+_x\equiv d$ for all $x\in [n]$, and the one in which it is \emph{regular}, i.e. $d^+_x=d_x^-\equiv d$ for all $x\in [n]$ . Note that for both cases $\delta = d$ and $\rho =1/d$, thus $\varphi(\infty)$ has the same expression given by
    \begin{equation*}
    \sqrt{\frac{d}{d-1}}\,.
    \end{equation*}
    Such value matches exactly the value of of $\vartheta$ for the \emph{$d$-regular case}. We would like to point out that such result, only for the simplified $d$-regular random directed graphs setting, could also be retrieved from \cite{CCC16}. For general out-regular random graphs, analyzing the quantity $\vartheta$, in \cite{ACHQ23} it has been showed that the expected consensus time happens faster than the regular case, depending on the size of the second moment of the in-degree sequence. We can get from a different perspective a new, coherent interpretation of the speed at which the consensus phenomena is happening.

    \item If we consider long time scales, that is such that $\lim_{n\to \infty} t_n/n=\ell \in(0,\infty)$, by Theorem \ref{thm:main} we have that the expected density of discordant edges is close to 
    $$ 2u(1-u)\,\varphi(\infty)\,e^{-2\ell \vartheta^{-1}}\,. $$ 
    By the previous observations, we conclude that, for long time scales, the expected density of discordant edges for $n$ large is smaller in the out-regular case with respect to the regular one. The difference becomes larger as the variability of the in-degrees increases, while they coincide only when it approaches zero, that is in the regular case.

    \item Finally we can make the following observations on the values of $\varphi(\infty)$. Once we fix the average degree $\delta$, we can observe that the function $x\mapsto 1-\frac{1-\sqrt{1-x}}{\delta\,x}$ is non-increasing in its domain $(0,1/2)$, with a small range. This implies that, at least on short and moderate time scales, the fraction of discordances remains larger the smallest the value that $\rho$ can attain. This happens when there are a lot of vertices with small in-degree and large out-degree. The same conclusion does not hold on the consensus scale as one has to compare such value with the exponential factor containing $\vartheta$.
\end{itemize}

\end{remark}

To the best of our knowledge, prior to our contribution in the current literature, the only work that adequately addressed the problem is \cite{ABHHQ22}, in the random regular graphs setting. The only other known ensemble is the complete graph, where the number of discordant edges is trivially the product of the numbers of vertices holding the two respective opinions. For a comparison with our result, we state the result for the random regular graph setting.

\begin{theorem}[{Theorem 1.1, \cite{ABHHQ22}}] \label{thm:discordant edges random regular graphs}
Fix $d\geq 3$, $u\in(0,1)$ and let $\theta_d = \frac{d-1}{d-2}$. For $n\geq d+1$ consider the voter model on a $d-$ regular random graph $G_{d,n}$ with $n$ vertices and initial distribution $\text{Bern}(u)^{\otimes V}$. Then, for any non-negative sequence $(t_n)_{n\in\mathbb{N}}$ such that the limit of $t_n$ and $t_n/n$ exists, it holds that
\begin{equation} \label{eq:expected discordant edges random regular graphs}
    \bigg|\mathbf{E}_u[\mathcal{D}_{t_n}] - 2u(1-u)\,f_d(t_n)\,e^{-2\frac{t_n}{n}\theta^{-1}_d}\bigg| \overset{\P}{\longrightarrow}0\,, \quad \text{as } n\to \infty\,,
\end{equation}
where $\P$ is the law of the random regular graph and $f_d:\mathbb{R}_+\to [0,1]$ is an explicit function such that 
$$f_d(t_n) \to \theta_d\,,$$
as $n\to \infty$.
\end{theorem}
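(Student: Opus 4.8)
The plan is to run the classical voter-model duality, which is the common engine behind all the results quoted above, and then carry out a short-time/long-time analysis of the resulting coalescing walks. Fix a realization of $G_{d,n}$ and an edge $e=(x,y)$. Running the dual system backwards in time from the two endpoints, the opinions $\eta_{t}(x)$ and $\eta_{t}(y)$ coincide exactly when the two dual lineages have coalesced by time $t$, and are otherwise assigned independent $\mathrm{Bern}(u)$ labels; hence by linearity of expectation
\begin{equation*}
\mathbf{E}_u[\mathcal{D}_{t}] \;=\; \frac{2u(1-u)}{|E|}\sum_{e=(x,y)\in E}\mathbb{P}_{x,y}\!\left(\tau_{\mathrm{meet}}>t\right),
\end{equation*}
where $\tau_{\mathrm{meet}}$ is the meeting (coalescence) time of two independent rate-one random walks on $G_{d,n}$ started from the endpoints of $e$, which sit at distance one. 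The problem thereby splits into (i) analysing the tail of $\tau_{\mathrm{meet}}$ for a typical edge, and (ii) showing that the edge-average above concentrates, in $\mathbb{P}$-probability, around a deterministic quantity.

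The next step is a short-time/long-time decomposition of $\tau_{\mathrm{meet}}$ driven by the local geometry. Since $G_{d,n}$ is locally tree-like, the ball of radius $c\log n$ around a typical edge is, w.h.p., isomorphic to a ball in the infinite $d$-regular tree $\mathbb{T}_d$, and I would couple the pair of walks with a pair on $\mathbb{T}_d$ until either leaves this neighborhood. On $\mathbb{T}_d$ the inter-walker distance $Z_t$ is a \emph{homogeneous} continuous-time birth--death chain on $\{0,1,2,\dots\}$, absorbed at $0$, with up-rate $2(d-1)/d$ and down-rate $2/d$ at every positive state, started at $Z_0=1$. I would then set $f_d(t):=\mathbb{P}_1(Z_s>0\text{ for all }s\le t)$ and read off its features from this chain: for $d\ge 3$ the chain is transient, so by gambler's ruin the total meeting probability from distance one is $1/(d-1)$ and $f_d(\infty)=\tfrac{d-2}{d-1}=\theta_d^{-1}$; moreover $f_d$ is explicit and strictly decreasing, and for any diverging $t_n=o(n)$ one has $f_d(t_n)\to f_d(\infty)$, which is the convergence of the short-time factor asserted in the statement.

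Conditionally on the walks not meeting during this local phase, they separate to a distance growing with $n$ and, by the uniform spectral gap of $G_{d,n}$ (bounded away from zero w.h.p.), their joint law relaxes to the product of uniform measures. From this effectively stationary configuration meetings are rare: the stationary rate at which one walk jumps onto the other is $2/n$, but each genuine meeting is immediately followed by a burst of, on average, $\theta_d$ further coincidences before the walks escape the local region, so the rate of \emph{fresh} meetings is $2/n$ thinned by the escape probability $\theta_d^{-1}=\tfrac{d-2}{d-1}$. Consequently the global meeting time is asymptotically exponential with rate $\tfrac{2}{n}\theta_d^{-1}$. Combining the two phases, and using that conditioned on local escape the elapsed short time is negligible on the $n$-scale so that the local and global contributions decouple, yields the product form
\begin{equation*}
\mathbb{P}_{x,y}\!\left(\tau_{\mathrm{meet}}>t_n\right)\;\approx\; f_d(t_n)\,e^{-2\frac{t_n}{n}\theta_d^{-1}},
\end{equation*}
valid simultaneously across the four regimes of $(t_n,t_n/n)$.

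Finally I would upgrade this per-edge estimate to the quenched convergence in \eqref{eq:expected discordant edges random regular graphs}. The key point is that the displayed right-hand side is the \emph{same} deterministic expression for every edge with a tree-like neighborhood, and w.h.p. all but $o(|E|)$ edges are of this type, while the global rate $\tfrac{2}{n}\theta_d^{-1}$ and the spectral gap are graph-wide quantities that concentrate; bounding the exceptional edges trivially by $1$, the edge-average concentrates around $f_d(t_n)\,e^{-2t_n/n\,\theta_d^{-1}}$, giving convergence in $\mathbb{P}$-probability. I expect the main obstacle to be the rigorous justification of the global exponential factor with the exact constant $\tfrac{2}{n}\theta_d^{-1}$: controlling the collision structure from near-stationarity so as to extract the escape-probability correction $\theta_d^{-1}$, and proving the asymptotic independence of the local and global phases underlying the product form, is the delicate part, and is precisely where quantitative mixing must be combined with a first-visit (Poissonization) analysis of $\tau_{\mathrm{meet}}$.
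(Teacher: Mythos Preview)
This theorem is quoted from \cite{ABHHQ22} and the present paper gives no proof of it; it is stated only for comparison with the paper's own main result. So there is no ``paper's own proof'' to compare against directly. That said, your sketch follows precisely the strategy the paper implements for its generalisation (Theorem~\ref{thm:main}): duality to coalescing walks, a short-time analysis via the locally tree-like structure, an exponential long-time tail coming from meeting-from-stationarity, and a concentration step. The only structural difference is that the paper, working in the directed inhomogeneous setting, cannot reduce to a simple birth--death distance chain on a single tree; it instead computes the short-time meeting probability via Dyck paths and marked Galton--Watson trees (Lemma~\ref{lemma:annealed meeting}), and organises the short/long decomposition through a ``deviation time'' $\tau_{\rm dev}$ rather than a distance threshold. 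Your homogeneous birth--death reduction is the natural simplification available in the $d$-regular undirected case and is exactly what \cite{ABHHQ22} does.

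One concrete point: you compute $f_d(\infty)=\tfrac{d-2}{d-1}=\theta_d^{-1}$, whereas the statement as written claims $f_d(t_n)\to\theta_d$. Your value is the correct one --- it is the escape probability on $\mathbb{T}_d$ from distance one, and it lies in $[0,1]$ as the stated codomain of $f_d$ requires, while $\theta_d=\tfrac{d-1}{d-2}>1$ cannot be a limit of a $[0,1]$-valued function. This is a typo in the paper's restatement, not an error in your argument. Your heuristic for the global rate (thinning the naive collision rate $2/n$ by the escape probability $\theta_d^{-1}$) is also the right intuition, and you correctly flag that making it rigorous is where the real work sits; in the paper this is imported from \cite{ACHQ23} as Theorem~\ref{thm:meeting from stationarity}.
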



The expression in \eqref{eq:expected discordant edges random regular graphs} shows that, similarly to our result, different time scales produce different behaviours. Moreover, the function $f_d$ that governs the leading term of the evolution up to linearity, replaced by $\varphi$ in our setting, is related to the first meeting time of two walks on an infinite, deterministic $d$-regular tree. This has to do with the locally-tree-like nature of the sparse $d$-regular random graph. We will exploit such beautiful property also in our directed, heterogeneous setting, and, as expected, this will lead to a comparable result. All the details are discussed in Section \ref{sec:Local structure of the graph}.    



\section{Preliminaries} \label{sec:Preliminaries}

\subsection{Local structure of the graph}\label{sec:Local structure of the graph} A consequence of the fact that we are considering realisations of sparse random graphs is that their local structure is locally tree-like. This is related to the fact that, as many others sparse random graphs models, the local weak limit of the sparse DCM is a Galton-Watson tree. See \cite{vdH16} for a modern introduction to the topic. 

We want to compare the exploration process of a neighborhood of $G$ with an exploration process of a marked Galton-Watson tree. For any fixed $x\in[n]$ and any $h=h_n>0$, define $\mathcal{B}^+_x(h)$, the $h$-out-neighborhood of vertex $x$, to be the set of paths starting from $x$ of length at most $h$. We generate $\mathcal{B}^+_x(h)$ using the breadth-first procedure (BF) starting from $x$ as priority rule, iterating the following steps:
\begin{enumerate}
    \item \label{item: 1}pick the first available unmatched tail $e$ according to BF starting from $x$;
    \item \label{item: 2}pick uniformly at random an \emph{unmatched} head $f$;
    \item \label{item: 3} draw the resulting directed edge $ef$. Continue until the graph distance from an unmatched tail in Item \ref{item: 1} to $x$ exceeds $h$.
\end{enumerate}
Let $x\in[n]$, and define a marked (out-directed) random tree $\mathcal{T}^+_x$ rooted at $x$ as follows: the root is assigned mark $x$, and all other vertices an independent mark $\ell\in[n]$ with probability $\frac{d_\ell^-}{m}$. Each vertex with mark $\ell\in[n]$ has $d_\ell^+$ children. Note that $\mathcal{T}^+_x$ is obtained by gluing together $d_x^+$ independent Galton-Watson trees with offspring distribution
\begin{equation}
    \label{eq: size-biased offspring distribution}
   \mu^+(k) \coloneqq \sum_{x\in[n]} \frac{d_x^-}{m} \ind(d^+_x = k)\ , \qquad k\in\N\ .
\end{equation}
Let $\mathcal{T}^+_v (h)$ be a subtree of $\mathcal{T}^+_v$ given by its truncation up to generation $h$. A classical description of the coupling between $\mathcal{B}^+_v(h)$ and $\mathcal{T}^+_v (h)$ can be found e.g. in \cite[Sec. 2.2]{CCPQ21} and \cite[Sec. 4.1]{ACHQ23}.

 \begin{lemma}[{Lemma 4.1, \cite{ACHQ23}}]\label{lemma:LTL-structure}
Assume the degree sequence satisfy Assumption \ref{deg-assumptionn}. Let $v\in[n]$, then for any $h>0$ there exists a coupling between $\mathcal{B}^+_v(h)$ and $\mathcal{T}^+_v (h)$ having law $\hat{\P}$ such that
\begin{equation*}
     \hat{\P}\left(\mathcal{B}^+_v(\hbar) \neq  \mathcal{T}^+_v (\hbar)\right) =o(1)\,, 
\end{equation*}
where
\begin{equation}\label{eq:def-h-bar}
\hbar =\hbar_n\coloneqq \frac{\log(n)}{5\,\log(d_{\rm max}^+)}\,.
\end{equation}
\end{lemma}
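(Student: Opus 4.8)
The plan is to prove this by the now-standard joint exploration/coupling argument for configuration-type models (as in \cite[Sec. 2.2]{CCPQ21}, \cite[Sec. 4.1]{ACHQ23}): build $\mathcal{B}^+_v(\hbar)$ by running the breadth-first pairing procedure of Items~\ref{item: 1}--\ref{item: 3} and, \emph{in lockstep on the same probability space} $\hat{\P}$, generate the truncated marked tree $\mathcal{T}^+_v(\hbar)$, arranging that the two objects coincide as marked, rooted, out-directed trees unless one of a small and explicitly identifiable collection of ``bad'' events occurs.

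Concretely, I would process the unmatched tails in BF order starting from $v$, and at the $j$-th matching step record the set $R_{j-1}$ of vertices already touched (incident to a tail or head revealed so far). We pick a uniform unmatched head $f$, say incident to a vertex $\ell$; since $d_{\rm min}^-\ge 2$ every revealed vertex indeed carries heads. Call the step \emph{good} if (i) $\ell\notin R_{j-1}$, and (ii) an auxiliary maximal coupling between the conditional law of $\ell$ (given (i)) and the size-biased law $\ell\mapsto d^-_\ell/m$ of \eqref{eq: size-biased offspring distribution} succeeds. On a good step we attach to the current tail a new vertex with mark $\ell$ and $d^+_\ell$ further tails, which is exactly the rule generating $\mathcal{T}^+_v$, so the offspring distribution reproduced is precisely $\mu^+$ of \eqref{eq: size-biased offspring distribution}. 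If every step performed before the BF distance from $v$ exceeds $\hbar$ is good, then no head is ever re-used inside the explored region (hence no cycle, multi-edge or self-loop is created) and every revealed mark agrees with the one the tree would assign, so $\mathcal{B}^+_v(\hbar)=\mathcal{T}^+_v(\hbar)$ as marked trees. Thus $\{\mathcal{B}^+_v(\hbar)\neq\mathcal{T}^+_v(\hbar)\}$ is contained in the union of the bad events, and it remains to estimate their probability by a union bound.

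By Assumption~\ref{deg-assumptionn}(2) the explored tree has branching at most $d^+_{\rm max}\le C$, so up to generation $\hbar$ it has at most $\sum_{i=0}^{\hbar}(d^+_{\rm max})^i\le 2\,(d^+_{\rm max})^{\hbar}=2\,n^{1/5}$ vertices by the choice \eqref{eq:def-h-bar}; hence the procedure performs $K_n=O(n^{1/5})$ matching steps and reveals $O(n^{1/5})$ vertices. Since $m\ge d^-_{\rm min}\,n\ge 2n$, and at any point at most $K_n$ heads are matched while at most $d^-_{\rm max}K_n\le C'K_n$ heads sit at revealed vertices, the number of unmatched heads at not-yet-revealed vertices is at least $m-(1+C')K_n\ge m/2$ for $n$ large. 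Therefore the probability that step $j$ fails (i) is at most $(1+C')K_n/(m/2)=O(n^{-4/5})$, and, conditioned on (i), the total-variation distance between the fresh-vertex mark law and $d^-_\ell/m$ — which is exactly the failure probability of the coupling (ii) — is again $O(K_n/m)=O(n^{-4/5})$ by the same depletion count. A union bound over the $K_n=O(n^{1/5})$ steps then yields
\begin{equation*}
\hat{\P}\bigl(\mathcal{B}^+_v(\hbar)\neq\mathcal{T}^+_v(\hbar)\bigr)\le K_n\cdot O(n^{-4/5})=O(n^{-3/5})=o(1)\,,
\end{equation*}
uniformly in $v\in[n]$, which is the assertion.

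The only point that requires genuine care — and what I expect to be the main obstacle — is the second (depletion) effect in (ii): one must verify that conditioning on ``all earlier steps good'' distorts the law of the next fresh vertex's mark by no more than $O(n^{1/5}/n)$ in total variation, i.e. that the size-biased offspring law \eqref{eq: size-biased offspring distribution} is faithfully reproduced throughout the \emph{entire} exploration, not just at the first step. This is handled by the standard observation that after $O(n^{1/5})$ pairings each vertex $\ell$ still has at least $d^-_\ell-O(n^{1/5})$ free heads, uniformly distributed among the remaining unmatched heads, so the empirical head-mark distribution stays within $O(n^{1/5}/n)$ of $(d^-_\ell/m)_{\ell}$; the auxiliary coupling in (ii) matches the two with the complementary probability. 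With Assumption~\ref{deg-assumptionn} giving uniform upper bounds on $d^+_{\rm max}$ and $d^-_{\rm max}$ and a uniform lower bound $m=\Theta(n)$, everything else is a routine first-moment/union-bound computation.
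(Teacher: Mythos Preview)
The paper does not actually prove this lemma: it is quoted as Lemma~4.1 of \cite{ACHQ23}, and the surrounding text simply refers the reader to \cite[Sec.~2.2]{CCPQ21} and \cite[Sec.~4.1]{ACHQ23} for ``a classical description of the coupling''. Your argument is precisely that classical coupling-and-union-bound computation, and it is correct; indeed the paper itself reproduces essentially the same estimate inline when it needs it (see the bound $\hat\P(\sigma\le (d^+_{\rm max})^{\hbar})\le 2(d^+_{\rm max})^{2\hbar}d^-_{\rm max}/m$ in the proof of Lemma~\ref{lemma:tau_dev_conditions}), which matches your $K_n\cdot O(K_n/m)=O(n^{-3/5})$ bound up to the treatment of constants.
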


\subsection{Coalescing random walks}
The consensus time for the voter model is related to an observable of a different Markovian process called \emph{coalescing random walks} (CRWs). In fact, this process can be interpreted as the stochastic \emph{dual} process of the voter model, in the sense that it tracks back in time the origin of the opinions. More precisely, let $\{(X_t^x)_{t\geq 0}\}_{x\in V}$ be a collection of rate one continuous-time random walk such that $X_0^x = x$, for any $x\in V$. This system of random walks is such that each time two walks meet at the same vertex they collapse (coalesce) into a new, independent single walk. Therefore, similarly to the consensus time, it is well-defined the \emph{coalescing time} $\tau_{\rm coal}$, the first time such that all the RWs coalesced into a single one. Note that, under the assumption that $G$ is finite, any two walks meet in finite time, thus $\tau_{\rm coal} < \infty$ almost surely. In full generality, it holds that $\tau_{\rm cons} \leq \tau_{\rm coal}$ almost surely, as the CRWs system is the dual process of the voter model. Under some further assumption on $G$ it can be proved that, in some cases, consensus and coalescing times are not that far apart. In particular, in \cite{Oli13} the author proved that under some \emph{mean field conditions} the asymptotic behaviour, with respect to the size of the network growing to infinity, of the consensus and coalescing time can be well-approximated by the expected meeting time of two independent random walk starting from stationarity.

\subsection{Random walks on sparse DCM} \label{sec: Random walks on sparse DCM}
We define $(X_t)_{t\geq 0}$ to be a continuous-time rate one simple random walk on a directed graph $G=([n], E)$ evolving on the \emph{out-degrees}, i.e. as the Markov chain with state space $[n]$ and generator
\begin{equation}
    (\mathcal{L}_{rw}f)(x) = \sum_{y\in[n]} \frac{|\{e\in E : e = (x,y)\}|}{d_x^+} \,[f(y)-f(x)], \quad f:[n]\to \mathbb{R}\,.
\end{equation}
Let $\mathcal{P}([n])$ be the set of probability measures on the vertex set $[n]$. Using a slight abuse of notation with respect to the law of the voter model, we denote by $\mathbf{P}_\mu$ the law of $(X_t)_{t\geq0}$ with $X_0 \sim \mu$ and $\mu \in \mathcal{P}([n])$, and with $\mathbf{E}_\mu$ its expectation. We adopt the usual notation $\mathbf{P}_x$, $\mathbf{E}_x$ whenever $\mu$ is a Dirac mass at some $x\in [n]$. In our setting we will be considering random walks evolving on the random environment depicted by the DCM. As a consequence, the latter probability measure describing the random walk's law will become random measures according to $\P$, the law of the environment.

As it will become clear in Section \ref{sec: Proof of the main result}, the main observable of interest of this paper is the first meeting time of two copies of independent random walks, defined as
\begin{equation} \label{eq:tau meet}
    \tau_{\rm meet }^{(x,y)} = \inf\{t\geq 0 : X_t^x = X_t^y \}\,,
\end{equation}
where $(X_t^z)_{t\geq 0}$, $z\in[n]$, represents a random walk having initial position $X_0^z = z$. More generally, when the initial distribution of the walks is given by the realisation of two measures $\mu$ and $\nu$, we write $ \tau_{\rm meet }^{\mu\otimes \nu}$ to describe the first meeting time of two independent walks $X,Y$ such that $X_0\sim \mu$ and $Y_0\sim \nu$.

We will prove our main result through continuous-time random walks due to the dual relation with the voter model. Nevertheless, it will be useful to work with the skeleton chain given by the discrete-time, asynchronous version of the process defined by the transition matrix
\begin{equation}
    \mathrm{P}(x,y) =  \frac{|\{e\in E : e = (x,y)\}|}{d_x^+}\,, \quad x,y\in[n]\,.
\end{equation}
We will denote by $P_\mu$ and $E_\mu$ the law and expectation of such Markov chain whenever the initial position of the walk is distributed according to $\mu$, where $\mu \in \mathcal{P}([n])$. 

In the following we report a result that was crucial in order to proceed with our discordant edges analysis. It gives insights about the asymptotic behaviour of the first meeting time of two random walks starting from their stationary distribution $\pi$. In particular, it shows that for $n=|V|$ large enough, with high probability the distribution and the expectation of the meeting time, rescaled by an explicit linear factor of the size of the network, are well-approximated by the ones of a rate one exponential random variable.

\begin{theorem}[{Theorem 3.1, \cite{ACHQ23}}] \label{thm:meeting from stationarity}
Let $(\mathbf{d}^+,\mathbf{d}^-)$ satisfy Assumption~\ref{deg-assumptionn} and $G$ be sampled from DCM$(\mathbf{d}^+,\mathbf{d}^-)$. Then, letting $\tau^{\pi\otimes\pi}_{\mathrm{meet}}$ denote the first meeting time of two independent stationary random walks, it holds
\begin{equation}\label{eq:meeting time law}
        d_W\left(\frac{\tau^{\pi\otimes\pi}_{\mathrm{meet}}}{\frac12\ \vartheta \times n},\ {\rm Exp}(1)\right)\overset{\P}\longrightarrow 0\,,
        \end{equation}
    with $d_W (\cdot,\cdot)$ denoting the Wasserstein 1-distance.

   \end{theorem}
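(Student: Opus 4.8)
The plan is to prove the exponential limit in two movements: (i) establish that $\tau^{\pi\otimes\pi}_{\mathrm{meet}}$, normalised by its mean, converges in distribution to $\mathrm{Exp}(1)$ on a high-probability event for the environment, with \emph{some} mean of order $n$; and (ii) identify that mean as $(1+o(1))\tfrac12\vartheta_n n$ by a local computation on the marked out-directed Galton--Watson tree of Section~\ref{sec:Local structure of the graph}. For (i) I would use that, w.h.p.\ under Assumption~\ref{deg-assumptionn}, the sparse DCM is an expander whose quenched mixing time is $O(\log n)$, so the product walk $(X_t,Y_t)$ on $[n]\times[n]$ equilibrates toward $\pi\otimes\pi$ on a time scale negligible compared to $n$. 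The diagonal $\Delta=\{(x,x)\}$ is, however, not a ``rare set'' in the naive sense, since two nearby walks may chase each other; so I would decompose time into blocks of length $\asymp t_{\mathrm{mix}}\,\mathrm{polylog}(n)$ and, within each block, isolate the excursions of the pair into the $\hbar_n$-neighbourhood of $\Delta$ (with $\hbar_n$ as in \eqref{eq:def-h-bar}). Conditionally on not having met, at the start of each block the pair is $\approx\pi\otimes\pi$ restricted to ``far from $\Delta$'', so it meets during the block with a block-independent probability $q_n=o(1)$; the number of blocks to meet is then geometric and the total time asymptotically exponential, with $\mathbf{E}_{\pi\otimes\pi}[\tau_{\mathrm{meet}}]=(1+o(1))\cdot(\text{block length})/q_n$. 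This is the ``mixing $\Rightarrow$ exponential hitting'' paradigm (cf.\ \cite{Oli13} and references therein) adapted to a target set carrying nontrivial local structure.

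For (ii) I would split the meeting rate into a global ``collision'' rate and a local correction. If the pair were genuinely $\pi\otimes\pi$-distributed away from $\Delta$, the instantaneous meeting rate would be $\mathbf{E}_{\pi\otimes\pi}[\mathrm P(X,Y)+\mathrm P(Y,X)]=2\|\pi\|_2^2$; using the DCM structure one evaluates $\|\pi\|_2^2$ to leading order in terms of $\delta_n,\beta_n$ (and the $\Theta(1/n)$ directedness correction to $\pi$), which already produces the ``$\beta-1$''-type contribution to the denominator of \eqref{theta}. The remaining factor $\frac{\gamma-\rho}{1-\rho}\cdot\frac{1-\sqrt{1-\rho}}{\rho}$ is the genuinely local effect: two walks travelling along a common branch of the Galton--Watson tree either meet, or see their ancestral lines separate, according to a one-dimensional excursion problem whose configurations are counted by Catalan numbers — solving the associated quadratic yields $\tfrac{1-\sqrt{1-\rho}}{\rho}$, the same algebraic object as $\varphi_n(\infty)$ in \eqref{eq:limiting value phi} and as the inner sum of \eqref{eq:phi}. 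Transferring this tree computation to the actual graph is done by the annealing technique of \cite{BCS18}, \cite{CCPQ21} and \cite[Sec.~6.1]{ACHQ23}: one explores the environment along the two trajectories up to time $\mathrm{polylog}(n)$ and, by Lemma~\ref{lemma:LTL-structure}, couples what the walks see to marked Galton--Watson trees (independent, or correctly glued when the walks are within $\hbar_n$), so that annealed expectations compute the quenched ones up to $o(1)$ w.h.p.

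Finally, to upgrade distributional convergence to convergence in Wasserstein-$1$ distance, as the statement requires, I would prove uniform integrability of $\tau^{\pi\otimes\pi}_{\mathrm{meet}}/(\tfrac12\vartheta_n n)$. The lower tail follows from the exponential limit itself; for the upper tail I would run a restart argument: from \emph{any} initial pair, after an extra $O(t_{\mathrm{mix}})$ burn-in the walks are $\approx\pi\otimes\pi$ and then meet within $C\vartheta_n n$ steps with probability bounded below, giving $\mathbf{P}(\tau_{\mathrm{meet}}>k\,\vartheta_n n)\le e^{-ck}$ uniformly in $n$ — which simultaneously pins the mean at $(1+o(1))\tfrac12\vartheta_n n$ and supplies the tail domination needed for the Wasserstein estimate. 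All estimates are carried out on one high-probability event $\mathcal G_n$ (combining the coupling Lemma~\ref{lemma:LTL-structure} with the expansion/mixing estimates), $\mathbf{P}(\mathcal G_n)\to1$, which is precisely the ``$\overset{\P}\longrightarrow 0$'' in \eqref{eq:meeting time law}.

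I expect the main obstacle to be the local half of the rate identification. The single-walk coupling of Lemma~\ref{lemma:LTL-structure} does not cover the regime where the two explorations overlap, so one must build a joint annealed coupling for two walks within distance $\hbar_n$ of each other, and then push the Catalan/generating-function computation through with errors that are $o(1)$ \emph{uniformly enough} to survive both the geometric-sum argument of step (i) and the uniform-integrability argument of the final step. Controlling the interaction of the two walks precisely while they are close — the event that determines the constant $\vartheta_n$ — is where the real work lies.
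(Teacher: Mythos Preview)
This theorem is not proved in the present paper: it is Theorem~3.1 of \cite{ACHQ23}, quoted here as a preliminary input (see the sentence preceding the statement in Section~\ref{sec: Random walks on sparse DCM}). The paper uses it as a black box, notably in the proof of Lemma~\ref{lemma:exponential meet from dev} and thence in Proposition~\ref{prop:convergence in probability long time scales}; no proof or sketch is given. So there is nothing in the paper to compare your proposal against.

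That said, your outline is a faithful summary of the strategy actually carried out in \cite{ACHQ23}: the ``mixing $\Rightarrow$ exponential hitting'' paradigm for part (i), the annealed/Galton--Watson computation for the local correction in part (ii) (which is indeed where the Catalan generating function and the constants $\rho,\gamma,\beta$ enter), and a tail bound to pass from convergence in law to Wasserstein convergence. Your identification of the main obstacle --- the joint annealed coupling when the two walks are close and the resulting identification of $\vartheta_n$ --- is exactly what \cite[Sec.~6]{ACHQ23} spends its effort on. If you were asked to reproduce the proof, this would be the right plan; for the purposes of the present paper, however, the correct ``proof'' is simply a citation.
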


\subsection{Annealed random walks}
We conclude this section explaining briefly one of the main tools that we used to derive our results. Since we will be studying random walks evolving on random environments, the laws of the walks will be a random variable with respect to the law of the environment, thus depending on the realisation of the graph $G$. We are interested in stating asymptotic results in probability w.r.t. the law $\P$ of the random graph, and to this aim we will often be interested in computing expectations $\E$ of the laws the observables of interest; in other words, we want to compute the \emph{annealed} version of such quantities. An useful way of computing the latter is to rewrite such expectations as a non-Markovian process that simultaneously let the walks move and also explores the graph. More precisely, for every $B\subset V$, $\mu \in \mathcal{P}(V)$ and $t\geq 0$ it holds that
\begin{equation} \label{eq: annealing}
    \E[P_\mu(X_t\in B)] = \P^{\rm an}_\mu (W_t \in B)\,,
\end{equation}
where $\P^{\rm an}_\mu$ is the law of the joint random variable describing the partial realisation of the DCM, as described in Section \ref{sec:DCM}, with at most $t$ matchings and $W_t\in V$ is the location of the annealed walk at time $t$ having $\mu$ as initial distribution. Such joint process is non-Markovian as at each time step $s$ the transition probabilities depend on the whole trajectory of explored vertices up to step $s$. The evolution can be described as follows: in an environment given by the empty matching of the edges, the walk samples its initial position $x\in[n]$ according to $\mu$; then it samples u.a.r. a tail of $x$ and u.a.r. a head $f$ incident to some vertex $y\in[n]$ among all the possible ones. The edge $(x,y)$ is formed and the walk moves from $x$ to $y$. The procedure iterates up to time $t$, where at each step, if the walk chooses a head that is already matched to a tail then no other edges are formed.

The $\ell$-th moment with respect to $\E$ of the transition probabilities associated to $P_\mu$ can be computed via a similar construction, using multiple random walks, for which the above construction reads as follows. For all $\ell\in\mathbb{N}$, $B_1,\dots,B_\ell \subset V$ and $t_1, \cdots,t_\ell \geq 0$ we can write
\begin{equation} \label{eq:annealing multiple}
    \E\Big[\prod_{i=1}^\ell P_\mu (X_{t_i} \in B_i)\Big] = \P^{\ell-\rm an}_\mu \Big(W^{(i)}_{t_i}\in B_i,\; \forall i\leq \ell\Big)\,,
\end{equation}
where the random variables $(W^{(i)})_{i\leq \ell}$ are independent annealed walks sampled analogously to the single annealed walk, with the difference that the $\ell$ walks are sampled sequentially, all having initial distribution $\mu$ and, for any $i<\ell$, the $i$-th walk evolves in the partial environment described by the first $i-1$ walks.

Another natural application of the annealing random walks is the computation of an expectation conditioned on a partial realisation of the environment $\gamma$, given by any partial matching of tails and heads according to the DCM uniform matching procedure. Let $\mu\in\mathcal{P}([n])$ depending only on $\gamma$, $\ell \in \mathbb{N}$, $B_1,\dots,B_\ell \subset V$ and $t_1, \cdots,t_\ell \geq 0$, then
\begin{equation} \label{eq:annealing conditioned}
    \E\Big[\prod_{i=1}^\ell P_\mu (X_{t_i} \in B_i)\mid \gamma\Big] = \P^{\ell-{\rm an} \mid \gamma}_\mu \Big(W^{(i)}_{t_i}\in B_i,\; \forall i\leq \ell\Big)\,,
\end{equation}
where $ \P^{\ell-{\rm an} \mid \gamma}_\mu $ is the joint law of the partial environment and the multiple random walks as described previously, but the initial environment of the first walk is given by $\gamma$ instead of the empty matching of the edges. Clearly, all the events in \eqref{eq: annealing} - \eqref{eq:annealing conditioned} can be replaced by any event $(\mathcal{A}_i)_{i\leq \ell}$ that depend only on the trajectories of $(X^{(i)}_{s})_{s\leq t_i, i\leq \ell}$.

 \section{Proof of the main result} \label{sec: Proof of the main result}

 This section is fully devoted to show the proof of Theorem \ref{thm:main}. It is divided into two subsections: in the first one we provide a proof for the main result only for short time scales, while in the second one we extend it to any time scale.
 
\subsection{Short time scales}

Fix $x,y\in[n]$ such that $x\to y$, and consider two discrete-time, asynchronous, independent random walks $X,Y$ on $G\times G$ such that $(X_0,Y_0)=(x,y)$. Let us introduce the following random variables that will become useful for the rest of the proof. 

Let $\mathcal{B}^+_x(h)$ be the out-neighbour of $x\in[n]$ in $G$ up to depth $h>0$, $$V^+_\star = \{x\in[n] \mid \mathcal{B}^+_x(\hbar) \mbox{ is a tree} \}\,,$$ and 
\begin{equation}
    \hbar = \hbar_n = \frac{1}{10} \frac{\log(n)}{\log(d^+_{\max})}\,.
\end{equation}
Define
\begin{align}
    \bar \tau &= \bar \tau^{(x,y)}=\inf\{t>0\mid X_t\notin (Y_s)_{s\leq t}\cup \{x\}\}\,, \label{eq:tau bar}\\
    \tau_{\rm dev} &= \tau_{\rm dev}^{(x,y)} = \inf\{t>0\mid \mathcal{B}^+_{X_t}(\hbar)\cap \mathcal{B}^+_{Y_t}(\hbar) = \emptyset \mbox{ and }X_t,Y_t\in V^+_\star \}\,.\label{eq:tau dev}
\end{align}
In words, $\tau_{\rm dev}$ is the first time such that the out-neighbour of the two walks are non-intersecting trees. As we will prove, this happens exactly when the walk $X$ does not follow the path of the walk $Y$. Moreover, let 
\begin{equation} \label{eq:nu dev}
    \nu_{\rm dev} (u,v) = \mathbf{P}(X_{\tau_{\rm dev}}=u, Y_{\tau_{\rm dev}}=v \mid (X_0,Y_0)=(x,y)), \quad u,v\in[n], u\neq v\,.
\end{equation}
The following lemma shows that the first time in which the walk starting at $x$ moves into a vertex that has not been visited previously by the walk starting at $y$, will happen in a short amount of time with high probability.

\begin{lemma} \label{lemma:tau-bar small}
    Suppose that the degree sequence satisfies Assumption \ref{deg-assumptionn}. Define the sequence $h_\star=h_\star^{(n)}$ such that $h_\star=\log^2(n)$. It holds that
    \begin{equation}
        \max_{x,y\in[n]} P(\bar\tau > h_\star)\mathds{1}_{x\to y} = o_\P(n^{-C\,\log(n)})\,,
    \end{equation}
    for some $C>0$.
\end{lemma}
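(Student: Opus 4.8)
The plan is to pass to the \emph{annealed} two–walk process, revealing the out–edges of $G$ only as $X$ and $Y$ traverse them, as in \eqref{eq:annealing multiple}. Conditioning on the edge $x\to y$ simply fixes one matching, so by \eqref{eq:annealing conditioned} it suffices to bound $\P^{2\text{-an}|x\to y}(\bar\tau>h_\star)$ for each ordered pair $(x,y)$ and then transfer the estimate to the quenched law $P=P_{(x,y)}$. The key structural remark is that on $\{\bar\tau>t\}$ the walk $X$ is confined to the \emph{old set} $S_t:=\{x\}\cup(Y_s)_{s\le t}$, which has at most (number of $Y$–steps) $+\,2\le h_\star+2=\log^2 n+O(1)$ vertices, and — more importantly — that on this event $X$ never reveals a fresh edge: each move of $X$ either uses a tail of its current vertex already matched by a previous step (then $X$ lands in $S_t$), or it reveals a new edge, and a new edge into a \emph{fresh} vertex would by definition trigger $\bar\tau$. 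Hence, up to $\{\bar\tau>t\}$, the only explored edges are those produced by $Y$, and $X$ merely shadows $Y$'s trajectory.

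The heart of the matter is a per–step escape estimate. Consider a step at which $X$ moves, which happens with probability $\tfrac12$ under the asynchronous dynamics, from $v=X_t$, and suppose $\bar\tau>t$ and that $v$ lies in the tree–like part of the explored region (so $v$ carries at most one matched out–tail, the one along which $Y$, hence $X$, entered the common path). Since $d_v^+\ge2$ by Assumption~\ref{deg-assumptionn}, with probability at least $1-1/d_v^+\ge\tfrac12$ the walk picks an unmatched tail of $v$ and reveals a new edge whose head is a uniformly chosen unmatched in–stub; as $S_t$ carries only $O(\log^2 n)$ in–stubs while $m-2h_\star=\Theta(n)$ remain unmatched, that head is fresh — triggering $\bar\tau$ — with probability $1-O(\log^2 n/n)$. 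Consequently $\mathbf P(\bar\tau>t+1\mid\mathcal F_t,\,\bar\tau>t)\le\tfrac12\cdot1+\tfrac12(\tfrac12+o(1))=\tfrac34+o(1)$, and iterating over $h_\star$ steps gives $\P^{2\text{-an}|x\to y}(\bar\tau>h_\star)\le(\tfrac34+o(1))^{h_\star}=\exp(-\Omega(\log^2 n))$. To obtain the quenched claim one bounds $\max_{x,y}P_G(\bar\tau^{(x,y)}>h_\star)\ind_{x\to y}\le\sum_{x,y}P_G(\bar\tau^{(x,y)}>h_\star)\ind_{x\to y}$, takes $\E$, uses $\E[P_G(\bar\tau^{(x,y)}>h_\star)\ind_{x\to y}]=\P(x\to y)\,\P^{2\text{-an}|x\to y}(\bar\tau>h_\star)$ together with $\sum_y\P(x\to y)=d_x^+=O(1)$, so that $\E[\max_{x,y}(\cdots)]=O(n)\exp(-\Omega(\log^2 n))$, and then applies Markov's inequality, shrinking the constant to absorb the $O(n)$; this gives $o_\P(n^{-C\log n})$ for a suitable $C>0$.

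The delicate point — and the main obstacle — is that the tree coupling of Lemma~\ref{lemma:LTL-structure} controls out–neighbourhoods only up to depth $\hbar=\Theta(\log n)$, whereas $X$ may travel distance up to $h_\star=\log^2 n$; thus "$v$ lies in the tree–like part" is not automatic, and if $Y$'s trajectory self–intersects some vertex could acquire two matched out–tails into $S_t$ and break the $\le\tfrac12$ escape bound. I would handle this by splitting the $h_\star$ steps into $\Theta(\log n)$ consecutive epochs of length $\hbar$ and running the estimate epoch by epoch: within each epoch the relevant neighbourhoods are trees — here one uses that $X$ shadows $Y$, so only $Y$'s single–walk exploration matters, which is exactly the regime of Lemma~\ref{lemma:LTL-structure}; that $Y$'s position at each epoch boundary is, after enough steps, essentially uniform and hence lands in $V^+_\star$; and that the already–explored portion has only polylogarithmic size — so each epoch fails to trigger $\bar\tau$ with probability at most $(\tfrac12+o(1))^{\hbar}=n^{-\Omega(1)}$, and multiplying over the $\Theta(\log n)$ epochs restores the $n^{-\Omega(\log n)}=\exp(-\Omega(\log^2 n))$ decay. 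Making the epoch restarts rigorous — in particular controlling, with the required super–polynomial precision, the events that the walk ever reaches a vertex outside $V^+_\star$ or that $Y$ back–tracks — is where the real work lies, and is carried out using the freshness built into the annealed construction together with the sparsity bounds already recorded.
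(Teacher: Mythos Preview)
The paper's proof is far simpler than yours: it is a quenched, two–line argument. It conditions on the number of $X$–steps $S(h_\star)\sim\mathrm{Bin}(h_\star,1/2)$ being at least $h_\star/3$ (this fails with probability $o(n^{-c\log n})$ by Hoeffding), and then asserts that on this event each $X$–step keeps $X$ inside $(Y_s)_{s\le t}\cup\{x\}$ with probability at most $1/d^+_{\min}$, because ``$X$ needs to follow the path of $Y$''. This gives $(d^+_{\min})^{-h_\star/3}=o(n^{-c\log n})$ directly --- no annealing, no Markov–inequality transfer from annealed to quenched, and no epoch decomposition. Your per–step bound $3/4+o(1)$ is the same estimate after averaging over which walk moves, so the core inequality is identical; the difference is entirely in the packaging.

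Your worry about the tree structure is precisely the point the paper's argument leaves implicit: the per–step bound $1/d^+_{\min}$ is literally correct only when $X$'s current vertex has at most one out–edge into $S_t$, i.e.\ in the tree–like situation, and the paper does not justify this at depth $h_\star=\log^2 n$. So you have put your finger on a genuine soft spot in the paper's write–up. That said, your proposed remedy does not close it either. Lemma~\ref{lemma:LTL-structure} (and the explicit estimates in the proof of Lemma~\ref{lemma:tau_dev_conditions}) control tree structure only with probability $1-o(1)$, at best $1-n^{-\Omega(1)}$; multiplying such per–epoch failure probabilities over $\Theta(\log n)$ epochs does not yield $n^{-\Omega(\log n)}$. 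To make the epoch scheme work you would need the tree coupling to succeed with probability $1-n^{-\Omega(1)}$ \emph{conditionally on $\bar\tau$ having survived all previous epochs}, and you have not argued that. In short: your route is more candid about the difficulty but does not resolve it, while the paper's route is much shorter but glosses over the same issue. (For what it is worth, the downstream uses of the lemma in the paper --- in Lemma~\ref{lemma:tau_dev_conditions} and \eqref{eq:tau dev small} --- would go through with a much weaker polynomial bound, which both arguments do deliver.)
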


\begin{proof}
    Fix $x,y\in[n]$ such that $x\to y$. Let $S(t)$ be the number of steps of the walk $X$ within time $t>0$. Since the walks are discrete-time and moves asynchronously, we have that $S(t)\overset{d}{=}\text{Bin}(t,1/2)$. Therefore,
    \begin{equation*}
    \begin{split}
        P(\bar\tau > h_\star) &= P(\bar\tau > h_\star,\, S(h_\star)\geq h_\star/3) + P(\bar\tau > h_\star,\,S(h_\star)< h_\star/3) \\
        &\leq (d^+_{\min})^{-\frac{h_\star}{3}} + \text{Pr} (\text{Bin}(h_\star,1/2)< h_\star/3) = o(n^{-\Bar{c}\,\log(n)})\,,
    \end{split}
    \end{equation*}
     for some $\Bar{c}>0$, where the first bound comes from the fact that in order to $\bar \tau$ not to happen within time $h_\star$ the walk $X$ needs to follow the path of $Y$ for all its steps within $h_\star$. We conclude using the definition of $h_\star$, the fact that $d^+_{\rm min} \geq 2$, by Assumption \ref{deg-assumptionn}, and that $\text{Pr} (\text{Bin}(h_\star,1/2)< h_\star/3) =o(n^{-c\,\log(n)})$, for some $c>0$, by Hoeffding's inequality.
\end{proof}


In the next lemma, we will prove that conditioned on the paths of the walks up to time $\bar \tau$, the out-neighbourhood generated by the positions of the walks at $\bar \tau$ gives two non intersecting trees of logarithmic size. Such result strongly relates the hitting times $\bar \tau$ and $\tau_{\rm dev}$.


\begin{lemma} \label{lemma:tau_dev_conditions}
Suppose that the degree sequence satisfies Assumption \ref{deg-assumptionn}. Let $\Bar{\tau}$ as in \eqref{eq:tau bar} and $\Xi = (X_s,Y_s)_{s\leq \bar \tau}\,$. For all $i\in\{1,2,3\}$, it follows that
\begin{equation*}
    \P^{{\rm an}\mid \Xi} ( \mathcal{A}_i ) = 1- o(1)\,,
\end{equation*}
where
\begin{equation*}
    \mathcal{A}_1= \{\mathcal{B}^+_{X_{\bar \tau}}(\hbar)\cap \mathcal{B}^+_{Y_{\bar \tau}}(\hbar) = \emptyset\},\quad \mathcal{A}_2 = \{X_{\bar \tau}\in V^+_\star\},\quad \mathcal{A}_3 = \{Y_{\bar \tau}\in V^+_\star\}\,,
\end{equation*}
and $\P^{{\rm an}\mid \Xi}$ denotes the conditioned annealing law, as defined in \eqref{eq:annealing conditioned}.
\end{lemma}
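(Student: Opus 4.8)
The plan is to condition on the full trajectory data $\Xi = (X_s, Y_s)_{s \le \bar\tau}$ and argue that, with respect to the conditioned annealing law $\P^{{\rm an}\mid\Xi}$, the two endpoints $X_{\bar\tau}$ and $Y_{\bar\tau}$ are "generic" vertices whose $\hbar$-out-neighbourhoods are revealed by exploring only a negligible fraction of the matching. The key observation is that the partial environment $\gamma$ determined by $\Xi$ uses at most $O(h_\star) = O(\log^2 n)$ matched edges (by Lemma~\ref{lemma:tau-bar small}, $\bar\tau \le h_\star$ whp, and each step of the annealed walk reveals at most one new edge), so the number of half-edges consumed is $O(\log^2 n)$, which is $o(m^{1/2})$ and in particular $o(n)$. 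From the vertex $X_{\bar\tau}$ one then grows the out-neighbourhood $\mathcal{B}^+_{X_{\bar\tau}}(\hbar)$ via the breadth-first annealed exploration; since $d^+_{\max} \le C$, this neighbourhood has size at most $C^{\hbar} = n^{1/10}\cdot C^{O(1)} = n^{1/10 + o(1)}$, again $o(n^{1/2})$.

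For $\mathcal{A}_2$ (and symmetrically $\mathcal{A}_3$): I would show that, conditionally on $\gamma$, running the BF exploration from $X_{\bar\tau}$ produces a tree whp. At each of the at most $n^{1/10+o(1)}$ matching steps, a "collision" occurs only if the freshly sampled head is incident to a vertex already present in the explored region (including the $O(\log^2 n)$ vertices frozen in $\gamma$). The number of such forbidden heads is at most $d^-_{\max}$ times the number of already-explored vertices, hence $O(n^{1/10+o(1)})$, out of at least $m - O(n^{1/10+o(1)}) = \Theta(n)$ unmatched heads. A union bound over the $n^{1/10+o(1)}$ steps gives collision probability $O(n^{1/10+o(1)} \cdot n^{1/10+o(1)}/n) = O(n^{-4/5 + o(1)}) = o(1)$. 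This is essentially the standard argument behind the local-tree-like coupling of Lemma~\ref{lemma:LTL-structure}, now run conditionally on the bounded-size environment $\gamma$, and one checks the error terms are uniform in the realisation of $\Xi$ on the whp event $\{\bar\tau \le h_\star\}$.

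For $\mathcal{A}_1$: after establishing that both $\mathcal{B}^+_{X_{\bar\tau}}(\hbar)$ and $\mathcal{B}^+_{Y_{\bar\tau}}(\hbar)$ are trees, I would explore them sequentially — first from $X_{\bar\tau}$, then from $Y_{\bar\tau}$ in the updated environment — and bound the probability that the second exploration ever lands on a vertex belonging to the first. The crucial input is that, by definition of $\bar\tau$ in \eqref{eq:tau bar}, at time $\bar\tau$ the walk $X$ has just stepped to a vertex \emph{not} on the path $(Y_s)_{s\le\bar\tau}$, so $X_{\bar\tau} \ne Y_{\bar\tau}$ and, more importantly, the two endpoints are separated in the already-revealed graph $\gamma$; hence when we grow the neighbourhoods afresh, each new head sampled during the second ($Y$-side) exploration hits the first neighbourhood only with probability $O(n^{1/10+o(1)}/n)$, and a union bound over $n^{1/10+o(1)}$ steps gives $o(1)$ again. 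Combining the three estimates with a final union bound yields $\P^{{\rm an}\mid\Xi}(\mathcal{A}_1 \cap \mathcal{A}_2 \cap \mathcal{A}_3) = 1 - o(1)$, hence the claim for each $\mathcal{A}_i$ individually.

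The main obstacle I anticipate is bookkeeping the conditioning correctly: $\Xi$ is itself produced by the annealed dynamics, so the environment $\gamma$ it reveals is random and correlated with the endpoints, and one must be careful that the "fresh" heads sampled during the subsequent neighbourhood explorations are genuinely uniform among the remaining unmatched heads given $\gamma$ (this is exactly the content of the conditioned annealing law \eqref{eq:annealing conditioned}, but the uniformity of the residual matching must be invoked explicitly). A secondary technical point is ensuring all the $o(1)$ bounds hold uniformly over the (random) value of $\bar\tau$ on the event $\{\bar\tau \le h_\star\}$, which is handled by noting that the worst case is $\bar\tau = h_\star$, giving a frozen environment of size $O(\log^2 n)$ — still negligible — so no uniformity issue actually arises.
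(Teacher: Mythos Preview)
Your proposal is correct and follows essentially the same approach as the paper: bound $\bar\tau\le h_\star$ via Lemma~\ref{lemma:tau-bar small}, then run the breadth-first exploration of each $\hbar$-out-neighbourhood in the conditioned annealing law and control the probability of a collision (hitting an already-used vertex) by a union bound over the at most $(d^+_{\max})^{\hbar}=n^{1/10}$ matching steps, with per-step failure probability $O\big(d^-_{\max}((d^+_{\max})^{\hbar}+h_\star)/m\big)$. The paper phrases the collision argument as the failure of a coupling with a marked Galton--Watson tree (defining $\sigma$ as the first collision time), whereas you argue directly about the DCM exploration; these are equivalent, since the coupling fails precisely at the first collision.
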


\begin{proof}
    Define a coupling between the exploration process of $\mathcal{B}^+_{X_{\bar \tau}}(\hbar)$ and an unimodular Galton-Watson tree, as defined in Section \ref{sec: Models and results}, rooted at $X_{\bar \tau}$ on a partial environment of $G$ given by the vertices and edges explored by $(X_s,Y_s)_{s\leq \bar \tau}$. Call $\hat{\P}$ the law of such a coupling. Let $\mathcal{F}_i$ be the event in which $\mathcal{A}_i$ fails, $i\in\{1,2,3\}$, with respect to $\hat \P$. Consider the uniform matching between tails and heads of the DCM. In order to analyze $\mathcal{A}_2$ and $\mathcal{A}_3$, let us construct sequentially the out-neighbour of $X_{\bar \tau}$ and $Y_{\bar \tau}$ up to $\hbar$ in the partial environment previously stated. Let $\sigma$ be the first time such that a head incident to a previously selected vertex is chosen and $h_\star =\log^2(n)$. Then, for any $t>0$,
    \begin{equation*}
        \hat\P(\sigma =t) \leq \hat\P(\sigma =t, \Bar{\tau}\leq h_\star) + \hat{\P}(\Bar{\tau}> h_\star)  \leq \frac{d^-_{\rm max} (t+h_\star)}{m} + o(n^{-C\,\log(n)})\,,
    \end{equation*}
    $C>0$, where during the $t$-th step in the matching procedure, there are at most $t+\bar \tau$ already matched vertices and, thanks to Lemma \ref{lemma:tau-bar small}, $\bar \tau \leq h_\star$ with high probability. Therefore
    \begin{equation*}
        \hat\P(\sigma \leq t) \leq \frac{t\,d^-_{\rm max} (t+h_\star)}{m} + t\, o(n^{-C\,\log(n)}) \,.
    \end{equation*}
    In order to explore the whole $\mathcal{B}^+_{X_{\bar \tau}}(\hbar)$ it is sufficient to take $t=(d^+_{\rm max})^\hbar$, thus 
    \begin{equation}\label{eq:failureF_1}
        \hat\P(\mathcal{F}_2) \leq \hat\P(\sigma \leq (d^+_{\rm max})^\hbar) \leq 2\frac{(d^+_{\rm max})^{2\hbar}\,d^-_{\rm max}}{m} + o(1)\,.
    \end{equation}
    The result follows by the definition of $\hbar$, and the fact that $d^-_{\rm max}$ and $d^+_{\rm max}$ are uniformly bounded by Assumption \ref{deg-assumptionn}. Similarly, the same conclusion follows for $\mathcal{F}_1$ and $\mathcal{F}_3$. In particular, the bound in \eqref{eq:failureF_1} reads
    \begin{equation*}
        \hat\P(\mathcal{F}_1)\leq \frac{(d^+_{\rm max})^{\hbar}\,d^-_{\rm max}(2(d^+_{\rm max})^{\hbar} + h_\star)}{m} + o(1) = o(1)\,.
    \end{equation*}
\end{proof}
\begin{corollary} \label{coro:tau_bar=tau_dev}
    Suppose that the degree sequence satisfies Assumption \ref{deg-assumptionn}. It holds that
    \begin{equation*}
       \max_{x, y\in[n]} P(\bar\tau^{(x,y)}=\tau^{(x,y)}_{\rm dev}) = 1-o_\P(1)\,.
    \end{equation*}
\end{corollary}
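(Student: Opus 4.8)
The plan is to derive Corollary~\ref{coro:tau_bar=tau_dev} directly from Lemma~\ref{lemma:tau-bar small} and Lemma~\ref{lemma:tau_dev_conditions}, by unpacking the definitions of the two hitting times and showing that on a high-probability event they must coincide. First I would observe that by definition $\tau_{\rm dev}^{(x,y)} \geq \bar\tau^{(x,y)}$ always holds: indeed, at any time $t < \bar\tau$ we have $X_t \in (Y_s)_{s\le t}\cup\{x\}$, so $X_t$ lies on the path traced by $Y$ (together with the common ancestor-type vertex $x$); in that situation the out-neighbourhoods $\mathcal{B}^+_{X_t}(\hbar)$ and $\mathcal{B}^+_{Y_t}(\hbar)$ necessarily share vertices — $X_t$ itself is reachable from $Y_t$ or vice versa along the already-explored path — so the defining condition of $\tau_{\rm dev}$ cannot yet be satisfied. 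Hence it always holds that $\bar\tau \le \tau_{\rm dev}$, and the content of the corollary is the reverse inequality holding with high probability.

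For the reverse inequality, the idea is that at the random time $\bar\tau$ the three events $\mathcal{A}_1,\mathcal{A}_2,\mathcal{A}_3$ of Lemma~\ref{lemma:tau_dev_conditions} together are exactly the defining conditions of $\tau_{\rm dev}$ being attained at time $\bar\tau$: $\mathcal{A}_1$ says $\mathcal{B}^+_{X_{\bar\tau}}(\hbar)\cap\mathcal{B}^+_{Y_{\bar\tau}}(\hbar)=\emptyset$, while $\mathcal{A}_2\cap\mathcal{A}_3$ says $X_{\bar\tau},Y_{\bar\tau}\in V^+_\star$. So on $\mathcal{A}_1\cap\mathcal{A}_2\cap\mathcal{A}_3$ we get $\tau_{\rm dev}\le\bar\tau$, and combined with the deterministic inequality above this forces $\bar\tau=\tau_{\rm dev}$. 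The remaining task is to convert the conditional annealed bound $\P^{{\rm an}\mid\Xi}(\mathcal{A}_i)=1-o(1)$ from Lemma~\ref{lemma:tau_dev_conditions} into a statement of the form $P(\bar\tau^{(x,y)}=\tau^{(x,y)}_{\rm dev})=1-o_\P(1)$, uniformly over $x,y\in[n]$.

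Concretely I would write $\E\big[P\big(\bar\tau^{(x,y)}\neq\tau^{(x,y)}_{\rm dev}\big)\big]$, condition on the trajectory $\Xi=(X_s,Y_s)_{s\le\bar\tau}$, use the tower property and the annealing identity \eqref{eq:annealing conditioned} to write this as $\P^{\rm an}\big(\bigcup_{i}\mathcal{F}_i\big)$ where $\mathcal{F}_i$ is the complement of $\mathcal{A}_i$, invoke Lemma~\ref{lemma:tau_dev_conditions} (which gives $\hat\P(\mathcal{F}_i)=o(1)$ uniformly) and a union bound over $i\in\{1,2,3\}$ to conclude $\E\big[P(\bar\tau\neq\tau_{\rm dev})\big]=o(1)$; then Markov's inequality promotes the expectation bound to convergence in $\P$-probability, i.e. $P(\bar\tau^{(x,y)}=\tau^{(x,y)}_{\rm dev})=1-o_\P(1)$. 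For the maximum over $x,y\in[n]$, one notes that the $o(1)$ bounds in Lemmas~\ref{lemma:tau-bar small} and~\ref{lemma:tau_dev_conditions} are uniform in the endpoints (they depend only on $\hbar$, $h_\star$ and the degree bounds from Assumption~\ref{deg-assumptionn}), so either a crude union bound over the at most $n^2$ pairs is absorbed by the super-polynomially small error in Lemma~\ref{lemma:tau-bar small}, or — cleaner — one keeps the $o(1)$ polynomial-in-$1/n$ bound from \eqref{eq:failureF_1} and observes it already tends to zero after the $n^2$ union bound thanks to the choice of $\hbar$ with the $1/10$ prefactor making $(d^+_{\max})^{2\hbar}/m$ decay faster than $n^{-2}$ would require... but in fact the statement as phrased only needs the maximum to be $1-o_\P(1)$, which follows from the uniformity of the per-pair bounds without needing a union bound at all. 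The main obstacle is a bookkeeping one: carefully checking that conditioning on $\Xi$ at the \emph{random} time $\bar\tau$ is compatible with the annealed-walk formalism of \eqref{eq:annealing conditioned} — that $\bar\tau$ is a stopping time for the joint exploration-and-walk process and that $\Xi$ determines a valid partial environment $\gamma$ — and that the event $\{X_{\bar\tau}=u,Y_{\bar\tau}=v\}$ on which $\nu_{\rm dev}$ is built is measurable with respect to it; once that is granted, the rest is a routine tower-property-plus-Markov argument.
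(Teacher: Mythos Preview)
Your proposal is correct and follows essentially the same route as the paper: the deterministic inequality $\bar\tau\le\tau_{\rm dev}$, then Lemma~\ref{lemma:tau_dev_conditions} to force equality at time $\bar\tau$ with high annealed probability, then passage from the annealed expectation bound to an $o_\P(1)$ statement. The paper's proof is terser---it compresses your tower-property-plus-Markov step into the single phrase ``boundedness of the random measure''---but the logic is identical; your caution about the union bound over pairs is well-placed (the polynomial error from \eqref{eq:failureF_1} would \emph{not} survive an $n^2$ union bound), and your fallback to the uniform per-pair bound is exactly what the paper implicitly does.
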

\begin{proof}
    The conclusion follows from the boundness of the random measure and fact that 
    \begin{equation*}
       \max_{x, y\in[n]} \E[P(\bar\tau^{(x,y)}=\tau^{(x,y)}_{\rm dev})] = 1-o(1)\,.
    \end{equation*}
    Indeed, $\bar\tau^{(x,y)}\leq\tau^{(x,y)}_{\rm dev}$, as until the walk $X$ does not take a different path with respect to the vertices explored by $Y$, the out-neighbour of $X$ and $Y$ will have a non-trivial intersection. The fact that $\bar\tau^{(x,y)}$ is the first time in which the conditions of $\tau^{(x,y)}_{\rm dev}$ are satisfied w.h.p. follows directly from Lemma \ref{lemma:tau_dev_conditions}.
\end{proof}
Recall the definition of $\tau_{\rm meet}^{(x,y)}$ in \eqref{eq:tau meet}. The following proposition gives us important information about the behaviour of the meeting time after the deviation time, i.e. after $\tau_{\rm dev}$ happened. In particular, it tells us that after deviating the walk will not meet again after at least $\mathcal{O}(\log^3(n))$ steps with high probability. As shown in the proof of Corollary 8.6, \cite{ACHQ23}, it is related to the fact that first the walks have to exit the directed trees in which they are trapped for at least $\mathcal{O}(\log(n))$ amount of time, and after that they need at least another $\mathcal{O}(\log^2(n))$ steps to chase each other and finally meet. This fact will become useful once we notice that the mixing time of the product chain of the two independent walks has logarithmic order, thus w.h.p. after deviating the walks will first mix and then possibly meet.
\begin{proposition}[Corollary 8.6, \cite{ACHQ23}] \label{prop:meet_after_dev}
    Suppose that the degree sequence satisfies Assumption \ref{deg-assumptionn}. Then
     \begin{equation*}
        \max_{(u,v)\in \supp{\nu_{\rm dev}}} P(\tau^{(u,v)}_{\rm meet} > \log^3(n)) \overset{\P}{\longrightarrow} 0\,,
     \end{equation*}
    as $n\to\infty$, where $\supp{\nu_{\rm dev}}$ denotes the support of the measure $\nu_{\rm dev}$.
\end{proposition}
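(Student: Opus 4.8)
The plan is to pass to the annealed two‑walk law of \eqref{eq:annealing multiple} and to prove that, from any fixed deviation configuration $(u,v)\in\supp\nu_{\rm dev}$, two independent annealed walks started at $u$ and $v$ coalesce within $\log^3(n)$ steps with probability $1-o(1)$. Transferring this bound to the quenched law $P$ will then follow from the uniform boundedness of the annealed transition probabilities together with a union bound over $\supp\nu_{\rm dev}$: by Lemmas~\ref{lemma:tau-bar small} and~\ref{lemma:tau_dev_conditions} the deviation occurs within $h_\star=\log^2(n)$ steps and $(u,v)$ lives on two disjoint out‑trees of size at most $(d^+_{\rm max})^{\hbar}=n^{1/10}$, so the support carries only polynomially many elements. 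I would decompose $\tau_{\rm meet}^{(u,v)}=\tau_{\rm exit}+\tau_{\rm chase}$, where $\tau_{\rm exit}$ is the time for both walks to leave their truncated out‑trees $\mathcal{B}^+_{u}(\hbar),\mathcal{B}^+_{v}(\hbar)$, and $\tau_{\rm chase}$ is the additional time to coalesce once the trajectories interact in the bulk.

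For the exit phase, Assumption~\ref{deg-assumptionn} gives $d^+_{\rm min}\ge 2$, so each walk advances by a fresh generation at every one of its own clock rings; as in Lemma~\ref{lemma:tau-bar small} the number of such rings within time $t$ dominates a $\mathrm{Bin}(t,1/2)$ variable. Since the trees have depth $\hbar=O(\log n)$, a Hoeffding estimate shows that both walks have left their trees within $O(\log n)$ steps except on an event of probability $o(n^{-c\log n})$, uniformly over the support, so that $\tau_{\rm exit}\le\log^2(n)$ with overwhelming probability.

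For the chase phase I would use the annealed exploration to force the two forward paths to re‑merge quickly. Continuing the pairing procedure of \eqref{eq:annealing multiple} for a further $O(\log n)$ steps, the two explored clusters reach size $n^{\Theta(1)}$ while $\Theta(m)=\Theta(n)$ heads remain unmatched, so a birthday‑type estimate makes the trajectories select a common head, and hence hit a common vertex, with probability $1-o(1)$, indeed with super‑polynomially small failure once the clusters exceed $\sqrt n$. After the trajectories coincide on a common directed segment, the signed gap between the leading and the trailing walk evolves as a lazy nearest‑neighbour walk on $\N$ absorbed at the origin; starting from a gap of size $O(\log n)$, the standard quadratic hitting‑time estimate yields absorption within $O(\log^2 n)$ steps with probability $1-o(1)$. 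Summing the two contributions gives $\tau_{\rm meet}^{(u,v)}=O(\log^2 n)\le\log^3(n)$ with high probability, and a union bound over the polynomially many configurations in $\supp\nu_{\rm dev}$ produces $\max_{(u,v)}\P^{2-\rm an}(\tau_{\rm meet}^{(u,v)}>\log^3 n)=o(1)$.

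The main obstacle will be the chase phase: establishing, uniformly over the polynomially many deviation configurations and within the annealed pairing, both that the two forward explorations re‑merge and that the induced gap process is genuinely absorbed before $\log^3(n)$ steps. This demands careful control of the annealed environment once the two clusters begin to interact — in particular ruling out repeated branching that would reset the gap — and is exactly where the logarithmic mixing time of the product chain noted after the statement enters, guaranteeing that the post‑exit pair is well spread so that the collision‑and‑chase estimate applies. The final transfer from $\P^{2-\rm an}$ to the quenched $P$ then uses that the annealed probabilities are uniformly bounded, so that convergence in $\P$‑probability of $\max_{(u,v)}P(\tau_{\rm meet}^{(u,v)}>\log^3 n)$ to $0$ follows from the vanishing of its annealed expectation via Markov's inequality.
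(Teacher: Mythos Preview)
Your proposal is aimed at the wrong target. The displayed statement in the paper contains a typo: as written it asserts
\[
\max_{(u,v)\in\supp\nu_{\rm dev}} P(\tau^{(u,v)}_{\rm meet}>\log^3(n))\overset{\P}{\longrightarrow}0,
\]
i.e.\ that the walks \emph{do} meet within $\log^3(n)$ steps, and this is what you set out to prove. But every use of the proposition in the paper requires the opposite inequality. In the proof of Lemma~\ref{lemma:exponential meet from dev} it is invoked to deduce $\mathbf P(\tau_{\rm meet}^{(u,v)}<t)\to 0$ for all $t\le\log^2(n)$, and Corollary~\ref{coro:dev after meet} uses it to say that after deviation the walks do \emph{not} meet before any $t_n=o(\log^3(n))$. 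The preceding discussion (``after deviating the walks will not meet again after at least $\mathcal O(\log^3(n))$ steps'') confirms this. The intended statement is
\[
\max_{(u,v)\in\supp\nu_{\rm dev}} P\big(\tau^{(u,v)}_{\rm meet}\le\log^3(n)\big)\overset{\P}{\longrightarrow}0,
\]
which is also consistent with Theorem~\ref{thm:meeting from stationarity}: the meeting time from well-separated starting points is of order $n$, not polylogarithmic.

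Because you are chasing the literal (wrong) statement, the core of your argument is attempting to establish something false, and the technical steps break accordingly. In the ``chase phase'' you claim the explored clusters reach size $n^{\Theta(1)}$ after $O(\log n)$ steps; but a single annealed walk explores a \emph{path}, not a breadth-first tree, so after $t$ steps it has revealed at most $t$ vertices. With two paths of length $O(\log n)$ in a graph with $\Theta(n)$ heads, the birthday probability of a shared head is $O((\log n)^2/n)=o(1)$, the opposite of what you need. And even if the paths did cross, on the out-directed tree-like geometry a walk that has branched away cannot backtrack, so the ``signed gap'' picture does not apply. The correct argument (sketched before the statement and carried out in the cited reference) goes the other way: from $(u,v)\in\supp\nu_{\rm dev}$ the two walks are confined for $\hbar=\Theta(\log n)$ steps to disjoint out-trees, hence cannot meet during that window; after exiting, a second-moment/annealing computation shows that the probability of meeting in the next $\log^3(n)$ steps is still $o(1)$, uniformly over the support.
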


We deduce that, if we are interested in computing the first meeting time of two independent random walks within a short time scale, i.e. of the order of $t_n=o(\log^3(n))$, it suffices to consider only the events in which they do not deviate before they meet. More formally, the following corollary holds true.

\begin{corollary}\label{coro:dev after meet}
     Suppose that the degree sequence satisfies Assumption \ref{deg-assumptionn}. For any $x,y\in[n]$, $x\to y$, and any sequence $t=t_n$ such that $\lim_{n\to \infty}t_n$ exists and $t_n=o(\log^3(n))$, it holds that
     \begin{equation*}
         P(\tau_{\rm meet}^{(x,y)}\leq t) = P(\tau_{\rm meet}^{(x,y)}\leq t,\, \tau_{\rm dev}^{(x,y)}>\tau_{\rm meet}^{(x,y)}) + o_\P(1)\, 
     \end{equation*}
\end{corollary}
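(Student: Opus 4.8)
The plan is to decompose the event $\{\tau_{\rm meet}^{(x,y)}\le t\}$ according to whether deviation happens before meeting, and show the complementary piece is negligible. Write
\[
P(\tau_{\rm meet}^{(x,y)}\le t) = P(\tau_{\rm meet}^{(x,y)}\le t,\ \tau_{\rm dev}^{(x,y)}>\tau_{\rm meet}^{(x,y)}) + P(\tau_{\rm meet}^{(x,y)}\le t,\ \tau_{\rm dev}^{(x,y)}\le\tau_{\rm meet}^{(x,y)})\,,
\]
so the whole task reduces to bounding the second term by $o_\P(1)$, uniformly in $x,y$ with $x\to y$. The natural way to do this is to split further according to the value of $\tau_{\rm dev}^{(x,y)}$ (equivalently $\bar\tau^{(x,y)}$, by Corollary \ref{coro:tau_bar=tau_dev}): either deviation is ``early'', say $\tau_{\rm dev}\le h_\star=\log^2(n)$, or it is ``late''. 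On the late event, since $\bar\tau=\tau_{\rm dev}$ w.h.p. and $P(\bar\tau>h_\star)=o_\P(n^{-C\log n})$ by Lemma \ref{lemma:tau-bar small}, this contributes $o_\P(1)$ after taking the max over $x,y$ (the $n^2$ pairs are absorbed by the super-polynomial decay). So the only real content is the early-deviation event.

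On the early event $\{\tau_{\rm dev}\le h_\star\}$, condition on the position $(X_{\tau_{\rm dev}},Y_{\tau_{\rm dev}})=(u,v)$; by definition this pair lies in $\supp\nu_{\rm dev}$. After $\tau_{\rm dev}$, the remaining time available for the two walks to meet before time $t$ is at most $t = t_n = o(\log^3(n))$; but by Proposition \ref{prop:meet_after_dev}, $\max_{(u,v)\in\supp\nu_{\rm dev}} P(\tau_{\rm meet}^{(u,v)}>\log^3(n))\overset{\P}\to 0$, hence \emph{a fortiori} the probability that the walks meet within the much shorter window $t_n$ after deviating is $o_\P(1)$, uniformly over the starting pair in the support of $\nu_{\rm dev}$. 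Applying the strong Markov property at $\tau_{\rm dev}$ and averaging over $\nu_{\rm dev}$ then bounds $P(\tau_{\rm meet}^{(x,y)}\le t,\ \tau_{\rm dev}\le\tau_{\rm meet}^{(x,y)}\le h_\star\text{-range})$ by this $o_\P(1)$ term. Taking the maximum over the $n^2$ pairs $x\to y$ requires that the convergence in Proposition \ref{prop:meet_after_dev} is already stated as a max over $\supp\nu_{\rm dev}$, which it is, so uniformity is inherited directly.

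The main obstacle is the careful handling of uniformity over all ordered pairs $x\to y$ simultaneously while keeping the estimates in probability with respect to $\P$: one must make sure the ``bad'' event in each sub-case either carries super-polynomial decay (the late-deviation case, via Lemma \ref{lemma:tau-bar small}) so that the union over $\Theta(n^2)$ pairs still vanishes, or is already phrased as a maximum over an appropriate support set (the after-deviation case, via Proposition \ref{prop:meet_after_dev}). A minor additional point is reconciling $\tau_{\rm dev}$ and $\bar\tau$: strictly $\bar\tau\le\tau_{\rm dev}$ always, and Corollary \ref{coro:tau_bar=tau_dev} gives equality w.h.p., which is what lets us transfer the small-$\bar\tau$ bound of Lemma \ref{lemma:tau-bar small} to $\tau_{\rm dev}$; one should absorb the $o_\P(1)$ discrepancy from that corollary into the error term. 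Everything else is a routine strong-Markov decomposition and a union bound.
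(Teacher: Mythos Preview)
Your decomposition and use of the strong Markov property at $\tau_{\rm dev}$ is exactly the paper's approach; its proof is a one-line appeal to Proposition~\ref{prop:meet_after_dev} together with $t_n=o(\log^3 n)$.

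Two remarks. First, the inference you draw from Proposition~\ref{prop:meet_after_dev} is a non sequitur as literally written: from $P(\tau_{\rm meet}^{(u,v)}>\log^3 n)\to 0$ one cannot conclude that meeting within the \emph{shorter} window $t_n$ is unlikely. The intended content of the proposition (clear from the paragraph preceding it and from its use in Lemma~\ref{lemma:exponential meet from dev}) is that $\max_{(u,v)\in\supp\nu_{\rm dev}}P(\tau_{\rm meet}^{(u,v)}\le\log^3 n)\to 0$; with that reading your ``a fortiori'' step is correct, since $\{\tau_{\rm meet}\le t_n\}\subset\{\tau_{\rm meet}\le\log^3 n\}$. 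Second, the early/late split at $h_\star$ is unnecessary here: on $\{\tau_{\rm dev}\le\tau_{\rm meet}\le t\}$ you automatically have $\tau_{\rm dev}\le t=o(\log^3 n)$, so a single strong-Markov bound at $\tau_{\rm dev}$, combined with the uniform estimate over $\supp\nu_{\rm dev}$, suffices without invoking Lemma~\ref{lemma:tau-bar small} or Corollary~\ref{coro:tau_bar=tau_dev}.
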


\begin{proof}
    It follows from Proposition \ref{prop:meet_after_dev} and the fact that $t_n=o(\log^3(n))$.
\end{proof}

The next result will be the building block for the asymptotic behaviour of the annealed expected number of discordant edges.

\begin{lemma} \label{lemma:annealed meeting}
Suppose that the degree sequence satisfies Assumption \ref{deg-assumptionn}. Fix $x,y\in[n]$, $x\neq y$, such that $x\to y$, and any sequence $t=t_n$ such that $\lim_{n\to \infty}t_n$ exists and $t_n=o(\log^3(n))$. For any $s\leq \frac{t}{2}$ and $\varepsilon\in(0,1)$, it holds that
\begin{equation}\label{eq:annealed meeting 1}
     \P^{{\rm an} \mid(x,y)} (\tau_{\rm meet}^{(x,y)}=2s,\, \tau_{\rm dev}^{(x,y)}>\tau_{\rm meet}^{(x,y)}) = o(n^{\varepsilon-1})\,,
\end{equation}
and
\begin{equation}\label{eq:annealed meeting 2}
     \P^{{\rm an} \mid(x,y)} (\tau_{\rm meet}^{(x,y)}=2s+1,\, \tau_{\rm dev}^{(x,y)}>\tau_{\rm meet}^{(x,y)}) = 2^{-1}\frac{1}{d^+_x} \mathds{1}_{s=0} + 2^{-2s-1}C_s\frac{1}{d^+_x}\frac{1}{d^+_y} \rho^{s-1}\mathds{1}_{s>0} + o(n^{\varepsilon-1})\,,
\end{equation}
where $C_s$ are the Catalan numbers, $\rho$ as in \eqref{eq:def-rho-gamma-delta-beta}  and $\P^{{\rm an} \mid(x,y)}$ is the conditioned annealed law described in \eqref{eq:annealing conditioned}.
\end{lemma}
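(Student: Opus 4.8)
The plan is to compute the annealed meeting probabilities by carefully tracking the joint exploration-and-walk process started from the configuration $\Xi=(x,y)$ with the single edge $x\to y$ already matched, under the event that the two walks meet for the first time at a given (small) time before the deviation time $\tau_{\rm dev}$. The crucial combinatorial simplification is Corollary~\ref{coro:dev after meet}: on the event $\{\tau_{\rm dev}>\tau_{\rm meet}\}$, up to the meeting time the two walks never leave a common branch, which means that, read as a path in the tree $\mathcal{T}^+$, the walk $X$ is always ``chasing'' $Y$ along the directed tree: at every step $Y$ either extends its own branch away from $X$ or retreats one step toward $X$, and $X$ always follows the vertices $Y$ has traversed. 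The first-meeting configurations are therefore in bijection with lattice-path–type objects. I would first handle the trivial case $s=0$: the walks meet at an odd time $1$ iff at the first (asynchronous) step it is $X$ that moves, which has probability $\tfrac12$, and then $X$ jumps to $y$, which happens with probability $1/d_x^+$ since the edge $x\to y$ is already present and no new matching is needed; this yields the $2^{-1}\frac{1}{d_x^+}\mathds{1}_{s=0}$ term with zero error.

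For $s\ge 1$ I would argue that a first meeting at time $2s+1$ on $\{\tau_{\rm dev}>\tau_{\rm meet}\}$ forces a specific excursion structure: over the $2s+1$ asynchronous steps there are $k$ moves of $Y$ and $2s+1-k$ moves of $X$, the $X$-trajectory must be a prefix of the vertex-sequence of the $Y$-trajectory (so $X$ only revisits already-matched heads when following $Y$), and at the final step $X$ lands exactly on the current position of $Y$. Decomposing by the Dyck-path structure of the gap between the two walks — the difference in depth performs a walk on $\mathbb{Z}_{\ge 0}$ starting and ending appropriately and returning to $0$ only at the end — gives the Catalan number $C_s$ as the count of admissible such paths of the relevant length. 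Each step of $Y$ that extends its branch requires a fresh uniform head-match; the probability that this fresh head is incident to a vertex of in-degree $d^-$ is $d^-/m$, and that vertex then offers $d^+$ out-stubs; summing the relevant ratios over $[n]$ produces exactly the quantity $\rho=\frac1m\sum_x d_x^-/d_x^+$. Counting the number of fresh matches needed (which is $s-1$ after accounting for the initial edge $x\to y$ and the two ``endpoint'' steps near $x$ and $y$) gives the factor $\rho^{s-1}$, and the remaining combinatorial bookkeeping of which walk moves at each step contributes the $2^{-2s-1}$ and the $\frac{1}{d_x^+}\frac{1}{d_y^+}$ factors from the out-stub choices at $x$ and $y$ respectively. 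The even-time claim \eqref{eq:annealed meeting 1} is easier: a first meeting at an even time on the chasing event is impossible in the ``clean'' tree picture because parity of the depth-difference forbids it, so any contribution must come from the $o(1)$-probability bad events where the local neighbourhood is not a tree or a head collision occurred — hence the $o(n^{\varepsilon-1})$ bound, which I would extract from the union bound in Lemma~\ref{lemma:tau_dev_conditions} together with the estimate $\hat\P(\sigma\le t)\le \frac{t\,d^-_{\rm max}(t+h_\star)}{m}+\dots$ used there, now applied with $t=O(\log^3 n)$ so that the error is $O(\mathrm{polylog}(n)/n)=o(n^{\varepsilon-1})$.

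The error terms $o(n^{\varepsilon-1})$ in both displays should be tracked uniformly: at each of the at most $t_n=o(\log^3 n)$ steps, the probability that the freshly sampled head is incident to a vertex already touched by the exploration (which would break the exact tree-and-ratio computation) is at most $\tfrac{O(\mathrm{polylog}(n))}{m}$, and $m=\Theta(n)$ by Assumption~\ref{deg-assumptionn}; a union bound over the $O(\mathrm{polylog}(n))$ steps gives a total error of order $\mathrm{polylog}(n)/n$, which is absorbed into $o(n^{\varepsilon-1})$ for any fixed $\varepsilon>0$. On the complement of all these collision events the annealed transition probabilities are \emph{exactly} the tree probabilities (fresh heads are uniform over unmatched stubs, whose empirical in-degree distribution is the size-biased law $\mu^+$ up to an $O(\mathrm{polylog}(n)/n)$ correction), so the main terms come out precisely as stated.

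The main obstacle I expect is the combinatorial identification of the admissible chasing trajectories with Catalan-counted Dyck paths and, in tandem, the exact accounting of how many \emph{fresh} head-matches each such trajectory consumes — one must be careful that steps where $Y$ retreats toward $X$, and steps where $X$ follows $Y$ into an already-created edge, do \emph{not} consume a fresh match, so that the net exponent of $\rho$ is $s-1$ and not $s$ or $2s$; getting this bookkeeping exactly right, and confirming it against the limiting identity $\varphi(\infty)=1-\frac{1-\sqrt{1-\rho}}{\delta\rho}$ (which is the generating-function evaluation $\sum_s 2^{-2s}C_s\rho^s=\frac{1-\sqrt{1-\rho}}{\rho/2}\cdot\frac12\cdot\dots$), is the delicate part. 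Everything else — the $s=0$ base case, the parity obstruction for even times, and the uniform $o(n^{\varepsilon-1})$ error control — follows routinely from the lemmas already established, principally Lemma~\ref{lemma:tau-bar small}, Lemma~\ref{lemma:tau_dev_conditions}, and Proposition~\ref{prop:meet_after_dev}.
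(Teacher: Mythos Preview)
Your proposal is essentially the paper's own argument: couple the annealed exploration to the out-directed Galton--Watson tree $\mathcal{T}^{(x,y)}$ rooted at $x$ with the edge $x\to y$ present, bound the coupling failure by $O(t_n^2\,d^-_{\max}/m)=o(n^{\varepsilon-1})$, observe that on the tree and under $\{\tau_{\rm dev}>\tau_{\rm meet}\}$ the depth-difference $d(s)$ is a $\pm1$ walk started at $1$ (so even meeting times are excluded by parity), count first-passage paths by Catalan numbers, and average the $s+1$ ``follow the right out-stub'' factors $\prod_{j=0}^{s}\frac{1}{D_j^+}$ against the size-biased marks to produce $\frac{1}{d_x^+}\frac{1}{d_y^+}\rho^{s-1}$.

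One small correction: in an \emph{out}-directed tree the walk $Y$ never ``retreats toward $X$''; both walks move only in the out-direction, so the dichotomy is simply ``$Y$ moves (distance $+1$) vs.\ $X$ moves along $Y$'s path (distance $-1$)''. This does not affect your Dyck-path count or the exponent of $\rho$, but your description of which steps consume a fresh match should be rephrased accordingly: \emph{every} $Y$-step is a fresh match (there are $s$ of them, revealing $s$ new vertices beyond $y$), \emph{no} $X$-step is a fresh match, and the $\rho$-factors arise from the $s-1$ \emph{random} out-degrees among the $s+1$ vertices $x,y,z_1,\dots,z_{s-1}$ at which $X$ must pick the correct out-stub.
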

\newcommand{\Txy}{\mathcal{T}^{(x,y)}}
\newcommand{\T}{\mathcal{T}}
\begin{proof}
Similar to what we described in Section \ref{sec:DCM}, consider the coupling between the exploration process generated by the annealed random walks on the original graph, conditioned on $x\to y$, having law $\P^{{\rm an} \mid(x,y)}$, and the same annealed process defined on the out-directed Galton-Watson tree rooted at $x$, still conditioned on $x\to y$, with offspring distribution given by
\begin{equation} \label{eq:mu+}
    \mu^+(k) = \sum_{z\in[n]} \frac{d_z^-}{m}\mathds{1}_{d_z^+=k}, \quad k\in \mathbb{Z}_+\,,
\end{equation}
and call it $\Txy$. Denote with $\hat{\P}$ the law of such a coupling, and with $\mathcal{F}$ the event that the coupling fails within time $t$. Analogous to what we established in Lemma \ref{lemma:tau_dev_conditions}, we can give an upper bound on the latter event as follows
\begin{equation} \label{eq:failing_coupling_tree}
    \hat{\P}(\mathcal{F})\leq t^2 \frac{d^-_{\rm max}}{m} = o(n^{\varepsilon-1})\,,
\end{equation}
for any $\varepsilon\in(0,1)$, thanks to Assumption \ref{deg-assumptionn} on the maximal in-degree and the fact that $t_n=o(\log^3(n))$.

It follows that we can consider the annealed process of interest only on the GW tree structure. Therefore, under the event $\tau_{\rm dev}^{(x,y)}>\tau_{\rm meet}^{(x,y)}$, the only possibility for the two walks starting at $x,y$ to meet is to follow each other on the same branch of the random tree rooted at $x$ and eventually meet. Due to parity conditions, since the walks start at distance one in $(X_0,Y_0)=(x,y)$, for any $s\leq \frac{t}{2}$ we can rule out the even times and thus we have that 
\begin{equation*}
    \P^{{\rm an} \mid(x,y)} (\tau_{\rm meet}^{(x,y)}=2s,\, \tau_{\rm dev}^{(x,y)}>\tau_{\rm meet}^{(x,y)}) = o(n^{\varepsilon-1})\,,
\end{equation*}
as expected, proving therefore the first part of the lemma. 

Let us consider the case in which the meeting happens at odd times on the event that the deviation time did not happen before the first meeting. If $s=0$, i.e. $\tau_{\rm meet} =1$, the event occurs if and only if the walk $X$ is selected and it moves in the direction of $Y$, to vertex $y$. Therefore 
\begin{equation}
     \P^{{\rm an} \mid(x,y)} (\tau_{\rm meet}^{(x,y)}=1,\, \tau_{\rm dev}^{(x,y)}>\tau_{\rm meet}^{(x,y)}) = \frac{1}{2}\frac{1}{d^+_x} + o(1)\,.
\end{equation}
 Fix $1\leq s\leq \frac{t}{2}$.  Thanks to \eqref{eq:failing_coupling_tree} it is enough to consider the same probabilities under the event that the coupling with the tree succeeds, therefore we restrict the focus on the construction of the G-W tree $\Txy$ with law $\hat\P$. Let $d(s)$ be the distance in $\Txy$ between $X_s$ and $Y_s$ at time $s$. We can rewrite the event in \eqref{eq:annealed meeting 2} in terms of the Markov process induced by $d(s)$. Recall the definition of $\Bar{\tau}=\Bar{\tau}^{(x,y)}$ in \eqref{eq:tau bar} and note that, on $\Txy$, it holds that
\begin{equation*}
    d(\Bar{\tau}) = d(t) = \infty, \quad \forall t\geq \Bar{\tau}\,.
\end{equation*}
Moreover, $(X_0,Y_0)=(x,y)$ implies that $d(0)=1$ and, given $d(s)$, we have that 
\begin{equation}
    d(s+1) = 
    \begin{cases}
        d(s) -1 & \text{ if } s < \Bar{\tau} \text{ and } X \text{ moves}\,, \\
        d(s) + 1 & \text{ if }  s < \Bar{\tau} \text{ and } Y \text{ moves}\,,\\
    \infty & \text{ if } s\geq \Bar{\tau}\,.
    \end{cases}
\end{equation}
Thanks to Corollary \ref{coro:tau_bar=tau_dev}, the events of the type $\{\tau_{\rm meet}^{(x,y)}=2s+1\} \cap \{\tau_{\rm dev}^{(x,y)}>\tau_{\rm meet}^{(x,y)}\}$ can be rewritten as
\begin{equation}
        \cC_s=\{0<d(r)<\infty\ , \forall r\in\{1,\dots,2s\} \}\bigcap \{d(2s +1)=0\}\,,\qquad s\le \frac{t}2\,.
    \end{equation}
We can look at $\cC_s$ as a collection of simple events of the type $\{d(1),\dots, d(2s+1)\}$ with proper constrains. The latter add up to an evolution in which the particle $X$ follows $Y$ one up to reaching it for the first time at time $2s+1$. By construction, they correspond to all the possible Dyck paths with $s$ upstrokes and $s+1$ downstrokes having $\pm1$ increments, starting from $d(0)=1$; call them $\mathfrak{D}_s$. Notice now that $\Txy$ can be seen as a rooted (at $x$), marked out-directed G-W tree, where each vertex $v\neq x,y$ has mark $\ell\in[n]$ with probability $\frac{d^-_z}{m}$, and $v$ has mark $\ell$ if and only if $d_v^+=d_\ell^+$. Let $\mathfrak{L}_s=(\ell_0,\dots,\ell_{s})\in [n]^s$ be the sequence of random marks of all the vertices in the path of length $s$ defined by the two random walks from the root to the vertex at which they meet (not included).

As a consequence, each simple event in $\cC_s$ can be associated uniquely to a couple $(\mathfrak{D}_s,\mathfrak{L}_s)$.

Observe that once we fix the marks of each vertex in which the walks move in their path of length $s$, the different events that contribute to a simple event $\{d(1),\dots, d(2s+1)\}\in\cC_s$ differ only in the order in which the particles moves. Thus they are all equiprobable as the walks are asynchronous and at each step the moving one is selected w.p. $1/2$. Moreover, it is known that the number of Dyck paths of length $2s+1$, $s\geq1$, is the Catalan number $C_s$. Therefore, it is enough to take a representative Dyck path $\mathfrak{D}_s$ and write
\begin{equation} \label{eq:expression1}
\begin{split}
    \hat\P(\tau_{\rm meet}^{(x,y)}=2s+1,\, \tau_{\rm dev}^{(x,y)}>\tau_{\rm meet}^{(x,y)})&=C_s\sum_{\mathfrak{L}_s\in[n]^s}\hat{\P}(\mathfrak{D}_s,\ \mathfrak{L}_s)\\
    &=
    2^{-2s-1}C_s\sum_{\mathfrak{L}_s\in[n]^s}\hat{\P}( \mathfrak{L}_s\mid \mathfrak{D}_s)\ .
\end{split}
\end{equation}
Recall the definition of $\mu^+$ in \eqref{eq:mu+} and consider the collection of independent random variables $D^+_0,D^+_1,\dots,D^+_{s}$ where $D^+_i\sim\mu^+$ for $i\ge 2$, while $D^+_0 =d_x^+$ and $D^+_1 =d_y^+$ almost surely.
Then
\begin{equation}\label{eq:expression2}
\begin{split}
    \sum_{\mathfrak{L}_s\in[n]^s}\hat{\P}( \mathfrak{L}_s\mid \mathfrak{D}_s)
    &= \sum_{d_0\geq 2}\dots \sum_{d_{s}\geq 2}\; \prod_{j=0}^{s} \frac{1}{d_j} \,\hat\P(D_0^+=d_0,\dots,D^+_s=d_{s}) \\
    &= \sum_{d_0,d_1,\dots,d_{s} \geq 2} \;\prod_{j=0}^{s}\frac{1}{d_j}\,\hat\P(D^+_j=d_j)\\
    &= \frac{1}{d_x^+}\frac{1}{d_y^+}\; \left(\sum_{k\geq 2} \frac{1}{k}\,\mu^+(k)\right)^{s-1}= \frac{1}{d_x^+}\frac{1}{d_y^+}\;\rho^{s-1}\,,
\end{split}
\end{equation}
where the second equality comes from the independence of the samplings of $D_i^+$, and the last one is a consequence of the following identity
\begin{equation}
   \sum_{k\geq 2} \frac{1}{k}\,\mu^+(k) = \sum_{k\geq 2} \frac{1}{k}\,\sum_{x\in [n]} \frac{d^-_x}{m}\mathds{1}_{d^+_x =k} = \frac1m\sum_x \frac{d_x^-}{d_x^+}=\rho\ .
\end{equation}
We conclude by plugging \eqref{eq:expression2} into \eqref{eq:expression1}.

\end{proof}

\begin{proposition}[Expectation short time scales] \label{prop:first moment short time scales}
    Suppose that the degree sequence satisfies Assumption \ref{deg-assumptionn}. Fix $u\in(0,1)$ and let $\eta_0=\text{Bern}(u)^{\otimes V}$. Then, for any non-negative sequence $t_n$ such that $\lim_{n\to \infty}t_n$ exists and $t_n=o(\log^3(n))$, it holds that 
\begin{equation}
\bigg|\E\big[\mathbf{E}_u[\mathcal{D}_{t_n}]\big] - 2u(1-u)\,\varphi(t_n)\bigg| \underset{n\to \infty}{\longrightarrow} 0\,,
\end{equation}
where $\varphi(\cdot)$ is as in \eqref{eq:phi}. In particular, if $\lim_{n\to \infty}t_n = \infty$ it holds that $|\varphi(t_n)-\varphi(\infty)|\longrightarrow0$, where
\begin{equation}\label{eq:phi_infinity}
    \varphi(\infty) = 1-\frac{1-\sqrt{1-\rho}}{\delta\,\rho}\,.
\end{equation}
\end{proposition}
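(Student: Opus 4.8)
## Proof Plan

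The plan is to connect the expected density of discordant edges to meeting probabilities of the dual coalescing random walks, then evaluate the annealed version using the tree approximation and Lemma~\ref{lemma:annealed meeting}. First I would recall the standard duality fact: for a fixed directed edge $(x,y)$, the probability that $\eta_t(x)\neq\eta_t(y)$ equals $2u(1-u)\,\mathbf{P}(\tau_{\rm meet}^{(x,y)}>t)$, since the two endpoints carry different opinions at time $t$ precisely when the two dual walks started at $x$ and $y$ have not yet coalesced and their (independent) initial opinions differ, which happens with probability $2u(1-u)$. Summing over all edges and dividing by $|E|=m$ gives
\begin{equation*}
\mathbf{E}_u[\mathcal{D}_t]=\frac{2u(1-u)}{m}\sum_{(x,y)\in E}\mathbf{P}\big(\tau_{\rm meet}^{(x,y)}>t\big)\,.
\end{equation*}
Taking $\E$ over the environment, and writing the sum over edges as $\sum_{x}\sum_{y:(x,y)\in E}$, the key point is that conditioning on the event $\{x\to y\}$ and then using the annealing identity \eqref{eq:annealing conditioned} turns each term into the conditioned annealed law $\P^{{\rm an}\mid(x,y)}$. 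The number of edges with tail $x$ is $d_x^+$, so $\E[\mathbf{E}_u[\mathcal D_t]] = \frac{2u(1-u)}{m}\sum_x d_x^+\,\P^{{\rm an}\mid(x,y)}(\tau_{\rm meet}^{(x,y)}>t)$ up to the small error from the edge-conditioning bookkeeping.

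Next I would use Corollary~\ref{coro:dev after meet} to replace $\{\tau_{\rm meet}^{(x,y)}\le t\}$ by $\{\tau_{\rm meet}^{(x,y)}\le t,\ \tau_{\rm dev}^{(x,y)}>\tau_{\rm meet}^{(x,y)}\}$ up to $o_\P(1)$ (valid since $t_n=o(\log^3 n)$), so that
\begin{equation*}
\P^{{\rm an}\mid(x,y)}(\tau_{\rm meet}^{(x,y)}>t)=1-\sum_{r=0}^{t}\P^{{\rm an}\mid(x,y)}\big(\tau_{\rm meet}^{(x,y)}=r,\ \tau_{\rm dev}^{(x,y)}>\tau_{\rm meet}^{(x,y)}\big)+o(1)\,.
\end{equation*}
Now I plug in Lemma~\ref{lemma:annealed meeting}: the even terms contribute $o(n^{\varepsilon-1})$, and the odd terms $r=2s+1$, $0\le s\le t/2$, contribute $\tfrac12\frac1{d_x^+}\mathds 1_{s=0}+2^{-2s-1}C_s\frac1{d_x^+}\frac1{d_y^+}\rho^{s-1}\mathds 1_{s>0}$. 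Summing over $r\le t$ and subtracting from $1$, then multiplying by $d_x^+$ and summing over $x$ with the weight $1/m$, I would use the identities $\frac1m\sum_x 1 = n/m = 1/\delta$ and $\frac1m\sum_x \frac{1}{d_x^+}\cdot(\text{something independent of }x)$—wait, more carefully: the term with $\mathds 1_{s=0}$ gives, after multiplying by $d_x^+$, just $\tfrac12$ per vertex, hence $\frac{1}{m}\sum_x\tfrac12 = \frac{1}{2\delta}$; the terms with $s>0$ carry $d_x^+\cdot\frac{1}{d_x^+}\frac{1}{d_y^+}=\frac{1}{d_y^+}$, and here I must be careful that $y$ is a \emph{random} neighbor of $x$ in the annealed construction, so that $\E[1/d_y^+\mid x\to y]$ averages to $\frac1m\sum_z \frac{d_z^-}{m}\cdot\frac{d_z^+}{d_z^+}\cdot\frac{1}{d_z^+}$—more transparently, summing $\frac{1}{m}\sum_x d_x^+ \cdot \frac{1}{d_x^+}\cdot(\text{avg over }y)$ collapses using the already-computed identity $\sum_k \frac1k\mu^+(k)=\rho$, giving a factor $\rho$ that combines with $\rho^{s-1}$ into $\rho^s$. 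Carrying this through, the discrete-time sum $1-\frac{1}{2\delta}\sum_{s}\big(2^{-2s}C_s\rho^s\mathds 1_{s\ge 1}+\mathds 1_{s=0}\big)$ (over $2s+1\le t$) is exactly the partial sum defining $\varphi$; converting from the discrete step count to the continuous time $t$ via the Poissonization $S(t)\overset{d}{=}\Poisson(t)$-type weights (the walk takes a Poisson$(2t)$ number of total moves, split evenly) produces the factor $e^{-2t}(2t)^k/k!$ and the indicator $\mathds 1_{k>2s}$, yielding \eqref{eq:phi}. Here one must track the asynchronous continuous-time clock carefully: the pair process has rate $2$, so the number of moves by time $t$ is $\Poisson(2t)$, and conditioning on there being $k$ moves, the event $\{\tau_{\rm meet}=2s+1\}$ requires $k\ge 2s+1$, i.e. $s\le \lfloor(k-1)/2\rfloor$, matching the summation range in \eqref{eq:phi}.

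For the final claim about $\varphi(\infty)$ when $t_n\to\infty$, I would note that the tail $\sum_{s>\lfloor(k-1)/2\rfloor}$ terms are controlled by $\sum_s 2^{-2s}C_s\rho^s=\frac{1-\sqrt{1-\rho}}{\rho/2}\cdot\frac12$ which converges (since $\rho<1$ and $4^{-s}C_s\sim s^{-3/2}/\sqrt\pi$, and $\rho\le 1/2$ keeps everything summable), so by dominated convergence the partial sum converges to the full sum as $t_n\to\infty$; evaluating $\sum_{s\ge 0}4^{-s}C_s\rho^s = \frac{2(1-\sqrt{1-\rho})}{\rho}$ via the generating function $\sum_{s\ge0}C_s z^s=\frac{1-\sqrt{1-4z}}{2z}$ at $z=\rho/4$, and substituting, gives $\varphi(\infty)=1-\frac{1-\sqrt{1-\rho}}{\delta\rho}$ as in \eqref{eq:phi_infinity}. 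The main obstacle I anticipate is the bookkeeping in the edge-to-vertex conversion combined with the annealed averaging over the random neighbor $y$—in particular making sure the factor $\rho$ emerges cleanly from $\E[1/d_y^+]$ under the size-biased offspring law $\mu^+$, and that the error terms $o(n^{\varepsilon-1})$ from Lemma~\ref{lemma:annealed meeting}, once summed over $O(n)$ edges and $O(\log^3 n)$ time steps, remain $o(1)$ (which holds for $\varepsilon$ small). A secondary technical point is the rigorous passage from the discrete skeleton chain to the continuous-time Poissonized sum, which should follow from standard properties of the uniformization/embedding of the continuous-time walk, but requires care with the asynchronous pair dynamics.
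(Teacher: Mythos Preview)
Your proposal is correct and follows essentially the same route as the paper: duality to meeting times, conditioned annealing, reduction via Corollary~\ref{coro:dev after meet}, the explicit odd-time formula from Lemma~\ref{lemma:annealed meeting}, then Poissonization and the Catalan generating function for $\varphi(\infty)$. The only organizational difference is that the paper writes the edge sum as a double sum over pairs $x,y$ weighted by $\P(x\to y)=\frac{d_x^+ d_y^-}{m-o(m)}$, which makes the bookkeeping you flagged as the ``main obstacle'' transparent: the factor $\frac{1}{d_y^+}$ from Lemma~\ref{lemma:annealed meeting} meets the explicit weight $d_y^-$ to produce $\frac{1}{m}\sum_y \frac{d_y^-}{d_y^+}=\rho$ directly, without having to reason about $y$ as a ``random neighbor'' under $\mu^+$.
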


\begin{proof}

 First observe that the expected density of discordant edges can be rewritten as follows
\begin{equation} \label{expected_disc_edg}
\mathbf{E}_u[\mathcal{D}_{t_n}] = \frac{1}{m}\sum_{e\in E} \mathbf{P}_u(e\in D_{t_n}) = \frac{1}{m}\sum_{x,y\in[n]} \mathbf{P}_u((x,y)\in D_{t_n}) \,\mathds{1}_{(x,y)\in E}\,.
\end{equation}
By the classical duality between the voter model and a system of coalescing random walks, see Section \ref{sec:voter model}, we can deduce that the event that an edge $e$ is discordant at time $t$ can be expressed as the event that two independent random walks starting at vertices with discordant opinion do not meet within time $t$. In other words, for any $e=(x,y)\in E$ and $t\geq0$ it holds that 
$$
\mathbf{P}_u(e=(x,y)\in D_{t}) = 2u(u-1) \mathbf{P}(\tau_{\rm meet}^{(x,y)}>t)\,,
$$
where $\tau_{\rm meet}^{(x,y)}$ was defined in \eqref{eq:tau meet}. We will prove the result for the discrete-time, asynchronous embedded chain and after that, by a Poissonization argument, pass to the continuous-time version. Recall that, as shown in Section \ref{sec: Random walks on sparse DCM}, we denote by $P,E$ the law of the discrete-time walks on $G$. Plugging in the latter into \eqref{expected_disc_edg} leads to
\begin{equation*}
    E_u[\mathcal{D}_{t_n}] = \frac{2u(1-u)}{m}\sum_{x,y\in[n]} P(\tau_{\rm meet}^{(x,y)}>t)\,\mathds{1}_{(x,y)\in E}\,.
\end{equation*}
We need to analyse $E_u[\mathcal{D}_{t_n}]$ as a random variable with respect to the graph $G$, that is, w.r.t. $\P$. Therefore, we begin by studying
\begin{equation} \label{eq:discordant1}
    \begin{split}
    \E[ E_u[\mathcal{D}_{t_n}]] &= \frac{2u(1-u)}{m}\sum_{x,y\in[n]} \E[P(\tau_{\rm meet}^{(x,y)}>t)\,\mathds{1}_{(x,y)\in E}] \\
    &= \frac{2u(1-u)}{m}\sum_{x,y\in[n]} \E[\mathds{1}_{(x,y)\in E}\E[P(\tau_{\rm meet}^{(x,y)}>t)\mid (x,y)]] \,,
    \end{split}
\end{equation}
so that we can analyse the following quantity
\begin{equation*}
    \P^{{\rm an} \mid(x,y)} (\tau_{\rm meet}^{(x,y)}>t) = \E[P(\tau_{\rm meet}^{(x,y)}>t)\mid (x,y)]\,.
\end{equation*}
Here $\P^{{\rm an} \mid(x,y)}$ denotes the usual \emph{annealed law}, with the difference that the initial environment is not the empty matching of the heads and tails, but the partial realization of the environment given by the directed edge $(x,y)$. We call $\P^{{\rm an} \mid(x,y)}$ the conditioned annealed law, as described in Section \ref{sec: Random walks on sparse DCM}.

Moreover, the probability that two fixed vertices $x,y\in[n]$ are connected via an edge $x\to y$ is proportional to their in- and out-degrees, i.e.
\begin{equation}
    \P(x\to y) = \frac{d_x^+ \,d_y^-}{m-o(m)}\,, \ \text{ as } n\to \infty\,.
\end{equation}
Therefore, we can combine \eqref{eq:discordant1} together with Lemma \ref{lemma:annealed meeting} and Corollary \ref{coro:dev after meet} in order to get
\begin{equation} \label{eq:expression first moment discrete time}
\begin{split}
     \E[ E_u[\mathcal{D}_{t_n}]] &= \frac{2u(1-u)}{m}\sum_{\substack{x\in[n] \\ y\neq x}} \frac{d_x^+ \,d_y^-}{m-o(m)} \Bigg(1-  \P^{{\rm an} \mid(x,y)} (\tau_{\rm meet}^{(x,y)}\leq t, \tau^{(x,y)}_{\rm dev} > \tau_{\rm meet}^{(x,y)}) + o(1)\Bigg) \\
     &= \frac{2u(1-u)}{m}\sum_{\substack{x\in[n] \\ y\neq x}} \frac{d_x^+ \,d_y^-}{m-o(m)} \Bigg(1-  \P^{{\rm an} \mid(x,y)} (\tau_{\rm meet}^{(x,y)}\leq t, \tau^{(x,y)}_{\rm dev} > \tau_{\rm meet}^{(x,y)}) \Bigg) + o(1)\\
     &\overset{(\star)}{=} \frac{2u(1-u)}{m}\sum_{\substack{x,y\in[n]}} \frac{d_x^+ \,d_y^-}{m-o(m)}\bigg[ 1-  2^{-1}\frac{1}{d^+_x}\mathds{1}_{t>0} - \sum_{1\leq s\leq \floor{\frac{t-1}{2}}}2^{-2s-1}C_s\frac{1}{d^+_x}\frac{1}{d^+_y} \rho^{s-1}\,\mathds{1}_{t>2}\bigg] + o(1)\\
     &\sim 2u(1-u)\bigg[ 1 -\frac{1}{m^2}\sum_{x,y\in[n]} \frac{d_y^-}{2}\mathds{1}_{t>0} - \frac{1}{m^2}\sum_{x,y\in[n]} \frac{d_y^-}{d^+_y} \sum_{1\leq 
     s\leq \floor{\frac{t-1}{2}}} 2^{-2s-1}C_s \rho^{s-1}\,\mathds{1}_{t>2}  \bigg] \\ 
     &= 2u(1-u) \bigg[ 1 - \frac{1}{2\,\delta} \Bigg(\mathds{1}_{t>0} + \sum_{s=1}^{\floor{\frac{t-1}{2}}} 2^{-2s}\,C_s\,\rho^s \,\mathds{1}_{t>2}\Bigg)\bigg]\,,
\end{split}
\end{equation}

where we used that $m=\sum_{x\in [n]}d_x^+ = \sum_{x\in [n]}d_x^-$, and we denoted by $\delta=\delta_n=\frac{m}{n}$ the average in- and out-degree. The additive error $o(1)$ in the first line can be pulled out of the sum since the remaining factor term is of order $\Theta(1)$. As shown in \eqref{eq:failing_coupling_tree}, the additive $o(1)$ error term in the $(\star)$ equality comes from the failing probability of the coupling between the local exploration of the graph and the G-W tree $\Txy$ and it has an order of magnitude of the type $\frac{\log^\alpha(n)}{n}$, for some $\alpha>1$, thus the extra $t_n=o(\log^3(n))$ factor coming from the sum in the second equality will not affect it. Finally, in the $(\star)$ equality of \eqref{eq:expression first moment discrete time} we used the following fact
\begin{equation*}
    \frac{2u(1-u)}{m} \sum_{x\in[n]}  \frac{d_x^+ \,d_x^-}{m-o(m)}  \Bigg(1-  \P^{{\rm an} \mid(x,y)} (\tau_{\rm meet}^{(x,y)}\leq t, \tau^{(x,y)}_{\rm dev} > \tau_{\rm meet}^{(x,y)}) + o(1)\Bigg) \leq \frac{c\, d^+_{\rm max}\,d^-_{\rm max}}{n} \to 0\,,
\end{equation*}
for some $c>0$.

We retrieve the desired expression $2u(1-u) \varphi(t)$ after passing to the continuous-time setting. In fact
\begin{equation} \label{eq:continuous-time to discrete-time}
\begin{split}
      \E[ \mathbf{E}_u[\mathcal{D}_{t_n}]] &= \frac{2u(1-u)}{m}\sum_{x,y\in[n]} \E[\mathbf{P}(\tau_{\rm meet}^{(x,y)}>t)\,\mathds{1}_{(x,y)\in E}] \\
      &= \frac{2u(1-u)}{m} \sum_{k\geq 0} e^{-2t} \frac{(2t)^k}{k!}\Bigg( \sum_{x,y\in[n]} \E\Big[P\Big(\tau_{\rm meet}^{(x,y)}>\floor{\frac{k-1}{2}}\Big)\,\mathds{1}_{(x,y)\in E}\Big]\Bigg)\\
      &\overset{\eqref{eq:expression first moment discrete time}}{\sim} 2u(1-u) \varphi(t)\,.
\end{split}
\end{equation}

We conclude by noticing that \eqref{eq:phi_infinity} is a direct consequence of a manipulation in order to get the generating function of the Catalan numbers. Indeed
\begin{equation*}
 \sum_{s=1}^{\infty} 2^{-2s}\,C_s\,\rho^s = \sum_{s= 0}^\infty C_{s}\, \bigg(\frac{\rho}{4}\bigg)^{s} -1 = G(\rho/4) -1 =\frac{1-\sqrt{1-\rho}}{\rho/2} -1\,,
\end{equation*}
where $$ G(x) =\frac{1-\sqrt{1-4x}}{2\,x}, \quad x\in(0,1),$$ is the generating function of the Catalan numbers, (see, e.g., \cite[Ch. 5.4]{GKP89} or \cite[Eq. 24]{FL03}). Moreover, in the expression \eqref{eq:phi}, the elements with $k\to \infty$ become dominant as $t\to\infty$.
This concludes the proof. 
\end{proof}

\begin{proposition}[Concentration under short time scales] \label{prop:convergence in probability short time scales}
    Under the same assumptions of Proposition \ref{prop:first moment short time scales} it holds that
    \begin{equation}
\bigg|\mathbf{E}_u[\mathcal{D}_{t_n}] - 2u(1-u)\,\varphi(t_n)\bigg| \overset{\P}{\longrightarrow} 0\,.
    \end{equation}
\end{proposition}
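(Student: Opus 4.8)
The plan is a second moment argument. By Proposition~\ref{prop:first moment short time scales} the annealed mean $\E[\mathbf{E}_u[\mathcal{D}_{t_n}]]$ already equals $2u(1-u)\varphi(t_n)+o(1)$, so by the triangle inequality and Chebyshev's inequality it suffices to show $\mathrm{Var}_\P(\mathbf{E}_u[\mathcal{D}_{t_n}])\to 0$. First I would use duality, as in \eqref{expected_disc_edg}, to write $\mathbf{E}_u[\mathcal{D}_{t_n}]=\frac{2u(1-u)}{m}\sum_{x,y\in[n]}\mathds{1}_{(x,y)\in E}\,\mathbf{P}(\tau_{\rm meet}^{(x,y)}>t_n)$, observe that this is bounded by $2u(1-u)$, and --- exactly as in the proof of Proposition~\ref{prop:first moment short time scales} --- pass to the discrete asynchronous skeleton via Poissonization: writing $Z_k:=\frac{2u(1-u)}{m}\sum_{x,y}\mathds{1}_{(x,y)\in E}\,P(\tau_{\rm meet}^{(x,y)}>k)$, the Poisson weights in \eqref{eq:continuous-time to discrete-time} put all but $o(1)$ of their mass on integers $k=o(\log^4 n)$, and since $Z_k\le 2u(1-u)$ the residual mass contributes $o(1)$ to the variance; so the task reduces to proving $\mathrm{Var}_\P(Z_k)=o(1)$ uniformly over $k=o(\log^4 n)$.

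\textbf{Second moment via four annealed walks.} Next I would expand $\E[Z_k^2]$ as a double sum over ordered pairs of potential edges $\big((x,y),(x',y')\big)$. Using Assumption~\ref{deg-assumptionn} (bounded degrees), the pairs with $\{x,y\}\cap\{x',y'\}\neq\emptyset$ contribute, after the $1/m^2$ normalization, at most $\frac{C}{m^2}\sum_{x,y}\P\big((x,y)\in E\big)=O(1/m)=o(1)$, so only the pairs with disjoint vertex sets matter. For such a pair, with $\gamma:=\{(x,y),(x',y')\}$, I would invoke the annealed representation \eqref{eq:annealing multiple}--\eqref{eq:annealing conditioned} for the four walks $W^{(1)},W^{(2)},W^{(3)},W^{(4)}$ started at $x,y,x',y'$ and sampled sequentially on the environment seeded by $\gamma$, together with $\P\big((x,y),(x',y')\in E\big)=\frac{d_x^+d_y^-d_{x'}^+d_{y'}^-}{m^2}(1+o(1))$. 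The crux is the asymptotic factorization
\[
\Big|\P^{{\rm an}\mid\gamma}\big(\tau_{\rm meet}^{(1,2)}>k,\ \tau_{\rm meet}^{(3,4)}>k\big)-\P^{{\rm an}\mid(x,y)}\big(\tau_{\rm meet}^{(x,y)}>k\big)\,\P^{{\rm an}\mid(x',y')}\big(\tau_{\rm meet}^{(x',y')}>k\big)\Big|=O\!\big(k^2/m\big),
\]
uniformly over disjoint pairs. To prove it I would couple $\P^{{\rm an}\mid\gamma}$ with two \emph{independent} conditioned $2$-annealed processes (seeded by $(x,y)$ and by $(x',y')$ respectively), built so that the first block is literally identical in the two pictures, and the second block agrees step by step unless it ever selects a half-edge already used by the first block; a union bound gives failure probability $O(k^2/m)$, since each block uses at most $O(k)$ half-edges in $k$ steps and there are $\ge m-O(k)$ free ones. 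On the complementary event the two blocks' trajectories --- hence $\tau_{\rm meet}^{(1,2)}$ and $\tau_{\rm meet}^{(3,4)}$ --- coincide with those of the independent processes, which yields the product.

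\textbf{Reassembly and the main obstacle.} Plugging the factorization and the edge probability back, the dominant part of $\E[Z_k^2]$ becomes $\frac{(2u(1-u))^2}{m^2}\sum_{x,y,x',y'}\frac{d_x^+d_y^-}{m}\frac{d_{x'}^+d_{y'}^-}{m}\,\P^{{\rm an}\mid(x,y)}(\tau_{\rm meet}^{(x,y)}>k)\,\P^{{\rm an}\mid(x',y')}(\tau_{\rm meet}^{(x',y')}>k)$, which --- restoring the $o(m^2)$ many diagonal and vertex-sharing pairs, harmlessly --- is exactly $\big(\E[Z_k]\big)^2+o(1)$ by \eqref{eq:discordant1}; meanwhile the errors carrying the $O(k^2/m)$ factor sum to $\frac{1}{m^2}\cdot m^2\cdot O(k^2/m)=O(k^2/m)=o(1)$, using $\sum_x d_x^+=\sum_x d_x^-=m$. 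Hence $\mathrm{Var}_\P(Z_k)=\E[Z_k^2]-\big(\E[Z_k]\big)^2=o(1)$ uniformly over $k=o(\log^4 n)$, Poissonizing as in \eqref{eq:continuous-time to discrete-time} transfers this to $\mathbf{E}_u[\mathcal{D}_{t_n}]$, and Chebyshev's inequality combined with Proposition~\ref{prop:first moment short time scales} concludes. The step I expect to be genuinely delicate is the collision estimate behind the factorization display: one has to set up the sequential four-walk exploration carefully and verify that ``no collision'' really does decouple it into two independent conditioned $2$-annealed processes. This is precisely where the hypothesis $t_n=o(\log^3 n)$ enters (so that $k^2/m=o(1)$); everything else is routine bookkeeping parallel to the proof of Proposition~\ref{prop:first moment short time scales}.
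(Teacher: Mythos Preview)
Your proposal is correct and follows the same second--moment--plus--Chebyshev strategy as the paper: expand $\E\big[\mathbf{E}_u[\mathcal D_{t_n}]^2\big]$ as a double sum over edge pairs, discard the $O(1/m)$ contribution from pairs sharing a vertex, and decouple the remaining four-walk annealed process into two independent two-walk processes via a collision coupling with failure probability $O(k^2/m)$.

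The organization differs slightly. The paper first restricts each pair to the event $\{\tau_{\rm dev}>\tau_{\rm meet}\}$ using Corollary~\ref{coro:dev after meet}, then couples the second pair with walks avoiding the path $\cG_{s_1}$ traced by the first pair, and finally plugs in the explicit Catalan formula of Lemma~\ref{lemma:annealed meeting} to obtain the upper bound $\E[Z_k^2]\le(1+o(1))\,\E[Z_k]^2$. Your route is more direct: you bypass $\tau_{\rm dev}$ and Lemma~\ref{lemma:annealed meeting} entirely and establish the factorization of $\P^{{\rm an}\mid\gamma}$ abstractly, which is a little cleaner. One small imprecision: the first block is not ``literally identical'' in the two pictures, because in $\P^{{\rm an}\mid\gamma}$ the first pair also sees the extra seed edge $(x',y')$, and symmetrically the second pair sees $(x,y)$; these add an extra $O(k/m)$ to the coupling failure, which is harmlessly absorbed into your $O(k^2/m)$ bound.
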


\begin{proof}
    We have to show
    \begin{equation} \label{eq:concentration condition second moment}
        \E\big[\mathbf{E}_u[\mathcal{D}_{t_n}]^2\big] = (1+o(1))\,\E\big[\mathbf{E}_u[\mathcal{D}_{t_n}]\big]^2\,,
    \end{equation}
    so that the desired result then follows by Proposition \ref{prop:first moment short time scales} and Chebyshev's inequality. We start by rewriting
\begin{equation} \label{eq:second moment expr}
    \begin{split}
        \E\big[\mathbf{E}_u[\mathcal{D}_{t_n}]^2\big] &= \frac{1}{m^2}\sum_{e\in E}\sum_{e'\in E} \E\Big[\mathbf{P}_u(e\in D_{t_n}) \mathbf{P}_u(e'\in D_{t_n})\Big] \\
        &= \frac{1}{m^2}\sum_{x,y\in[n]}\sum_{x',y'\in[n]} \E\Big[\mathbf{P}_u((x,y)\in D_{t_n}) \mathbf{P}_u((x',y')\in D_{t_n}) \,\mathds{1}_{(x,y)\in E}\mathds{1}_{(x',y')\in E}\Big] \\
        &= \frac{(2u(1-u))^2}{m^2}\sum_{x,y\in[n]}\sum_{x',y'\in[n]} \E\Big[ \mathbf{P}(\tau_{\rm meet}^{(x,y)}>t_n) \mathbf{P}(\tau_{\rm meet}^{(x',y')}>t_n) \,\mathds{1}_{(x,y)\in E}\mathds{1}_{(x',y')\in E}\Big]\,,
    \end{split}
\end{equation}
where in the last equality we used the duality between voter model and coalescing random walks, as stated in the proof of Proposition \ref{prop:first moment short time scales}. Among all vertices $x,y,x',y'$ there are six different cases:
\begin{align*}
 1)\, x'\neq x,y \text{ and } y'\neq x,y\quad &4)\, x'\neq x,y \text{ and } y'=x  \\ 
 2)\, x'=x \text{ and } y'\neq x,y \qquad    &5)\, x'\neq x,y \text{ and } y'=y   \\  
 3)\, x'=y \text{ and } y'\neq x,y  \qquad     &6)\, x'=x \text{ and } y'=y        
\end{align*}
We claim that the only one giving a positive contribution to \eqref{eq:second moment expr} is 1), while all the other cases are vanishing terms. In fact, if we consider case 2), it holds that
\begin{align*}
    \frac{(2u(1-u))^2}{m^2} &\sum_{x,y\in[n]}\sum_{y'\in[n]\setminus\{x,y\}} \E\Big[ \mathbf{P}(\tau_{\rm meet}^{(x,y)}>t_n) \mathbf{P}(\tau_{\rm meet}^{(x,y')}>t_n) \,\mathds{1}_{(x,y)\in E}\mathds{1}_{(x,y')\in E}\Big] \\
    &\leq \frac{1}{m^2} \sum_{x,y,y'\in[n]} \frac{d_x^+\,d_y^-}{m} \frac{d_x^+\,d_{y'}^-}{m} = \frac{1}{m}\sum_{x\in [n]} \frac{(d_x^+)^2}{m} \leq \frac{C}{m}\longrightarrow 0\,,
\end{align*}
for some $C>0$ according to Assumption \ref{deg-assumptionn}. We can use an analogous argument to show that the contribution of cases 3)- 6) are vanishing. Therefore, we have to study \eqref{eq:second moment expr} with the quadruple of vertices of type 1), i.e.

\begin{equation} \label{eq:second moment 1)}
    \frac{(2u(1-u))^2}{m^2}\sum_{x,y\in[n]}\sum_{x',y'\neq x,y} \E\Big[ \mathbf{P}(\tau_{\rm meet}^{(x,y)}>t_n) \mathbf{P}(\tau_{\rm meet}^{(x',y')}>t_n) \,\mathds{1}_{(x,y)\in E}\mathds{1}_{(x',y')\in E}\Big]\,.
\end{equation}
In what follows we will pass to the discrete-time embedded chain, then moving back to the original continuous-time one by a Poissonization argument as shown in \eqref{eq:continuous-time to discrete-time}. Conditionally on $(x,y)$ and $(x',y')$, we are left to study
\begin{equation}\label{eq:annealed second moment}
    \P^{2-{\rm an} \mid \chi} (\tau_{\rm meet}^{(x,y)}\leq t_n,\, \tau_{\rm meet}^{(x',y')}\leq t_n)\,,
\end{equation}
where $\chi$ is the information containing both $(x,y)$ and $(x',y')$, while $ \P^{2-{\rm an} \mid \chi}$ represents the annealed law of two couples of independent random walks over the partial environment given by the empty matching of the edges conditioned on $\chi$. 

Similarly to the proof of Proposition \ref{prop:first moment short time scales}, we can \eqref{eq:second moment 1)}, and thus \eqref{eq:annealed second moment}, as follows
\begin{equation}
    \P^{2-{\rm an} \mid \chi} (\tau_{\rm meet}^{(x,y)}\leq t_n,\, \tau^{(x,y)}_{\rm dev} > \tau_{\rm meet}^{(x,y)}\,, \tau_{\rm meet}^{(x',y')}\leq t_n\,,\tau^{(x',y')}_{\rm dev} > \tau_{\rm meet}^{(x',y')})\,,
\end{equation}
up to a vanishing additive error term. As in the proof of Proposition \ref{prop:first moment short time scales}, we can immediately rule out the even times so that we are interested in
\begin{equation}
    \P^{2-{\rm an} \mid \chi} (\tau_{\rm meet}^{(x,y)} = 2s_1+1\,, \tau^{(x,y)}_{\rm dev} > \tau_{\rm meet}^{(x,y)}\,, \tau_{\rm meet}^{(x',y')}=2s_2+1\,,\tau^{(x',y')}_{\rm dev} > \tau_{\rm meet}^{(x',y')})\,,
\end{equation}
for any $0\leq s_1\leq s_2\leq t_n/2$. For $i\in\{1,2\}$, call $(X_{s}^{(i)},Y_{s}^{(i)})_{s\leq t}$ the path up to time $t=t_n$ of the two couples of annealed walks such that $(X_0^{(1)},Y_0^{(1)})=(x,y)$ and  $(X_0^{(2)},Y_0^{(2)})=(x',y')$, $x,y\neq x',y'$. The final two steps that are needed in order to conclude the proof are the following: first show that the path of the first two walks w.h.p. never intersects with vertices $x',y'$; then that the whole path of the second pair of walks $(X_{s}^{(2)},Y_{s}^{(2)})_{s\leq t}$ has w.h.p. empty intersection with $(X_{s}^{(1)},Y_{s}^{(1)})_{s\leq t}$.

Define 
\newcommand{\G}{\mathcal{G}}
\newcommand{\A}{\mathcal{A}}
\begin{equation}
     \G_s = \bigcup_{r\leq s} Y_r^{(1)}\,,\quad \text{and} \quad \A_s^{z,w} = \big\{z\in \G_s \cup w\in \G_s\big\},
\end{equation}
for any $z,w\in[n]$ and $s\geq 0$. First notice that, for all $s_1\leq t/2$, the following holds 
\begin{equation} \label{eq:x'y'xy}
    \P^{2-{\rm an} \mid \chi} (\A_{s_1}^{x',y'}) \leq \Bigg(\frac{d_{x'}^- + d_{y'}^-}{m-o(m)}\Bigg)^{s_1} \leq   \Bigg(C\,\frac{d_{\rm max}^-}{m}\Bigg)^{s_1} =o(1)\,,
\end{equation}
for some $C>0$, as $t_n=o(\log^3(n))$ and $d_{\rm max}^- =o(n)$ by Assumption \ref{deg-assumptionn}.
Then
\begin{align*}
      &\P^{2-{\rm an} \mid \chi} (\tau_{\rm meet}^{(x,y)} = 2s_1+1\,, \tau^{(x,y)}_{\rm dev} > \tau_{\rm meet}^{(x,y)}\,, \tau_{\rm meet}^{(x',y')}=2s_2+1\,,\tau^{(x',y')}_{\rm dev} > \tau_{\rm meet}^{(x',y')}) \\
      &= \P^{2-{\rm an} \mid \chi} (\tau_{\rm meet}^{(x,y)} = 2s_1+1\,, \tau^{(x,y)}_{\rm dev} > \tau_{\rm meet}^{(x,y)}\,, \tau_{\rm meet}^{(x',y')}=2s_2+1\,,\tau^{(x',y')}_{\rm dev} > \tau_{\rm meet}^{(x',y')}, (\A_{s_1}^{x',y'})^c)\\
      &+ o(1)\,.
\end{align*}
    Let $(\bar{X}_{s},\bar{Y}_{s})_{s\leq t}$ be two independent random walks on $G$, with $(\bar{X}_{0},\bar{Y}_{0})= (X_{0}^{(2)},Y_{0}^{(2)})$ and such that for all $s\leq t/2$
\begin{equation}
    \Bar{X}_{s}\notin \G_{s_1}\,.
\end{equation}
We conclude the proof by a coupling argument between $(X_{s}^{(2)},Y_{s}^{(2)})_{s\leq t}$ and $(\bar{X}_{s},\bar{Y}_{s})_{s\leq t}$, conditionally on $(X_{s}^{(1)},Y_{s}^{(1)})_{s\leq t}$. We let the two processes evolve independently until $X^{(2)}_s\in  \G_{s_1}$ for some $s\leq t$, then we reject the move and resample $X^{(2)}_s$. We say that the coupling fails at time $r$ if it is the first time such that a move is rejected.
Fix $s_2\leq t/2$. For any $s\leq s_2$, let $\mathcal{F}_s$ be the event in the coupled probability space $\hat{\P}^{2-{\rm an} \mid \chi}$ such that the above coupling fails at step $s$, and let $\mathcal{F}=\cup_{s\leq s_2}\mathcal{F}_s$. Then, conditionally on $\G_{s_1}$, we have
\begin{equation}
    \hat{\P}^{2-{\rm an} \mid \chi} (\mathcal{F}_s) \leq \frac{d^-_{\rm max}\, s_1}{m}
\end{equation}
for any $s\leq s_2$, since this corresponds to the probability of selecting a head of any of the vertices in $\G_{s_1}$. Thus by the union bound we get
\begin{equation}\label{eq:coupling second moment}
    \hat{\P}^{2-{\rm an} \mid \chi} (\mathcal{F}) \leq \frac{d^-_{\rm max}\, s_1\,s_2}{m} \leq \frac{d^-_{\rm max}\,\log^6(n)}{m}= o(1)\,.
\end{equation}
Note that in order to prove \eqref{eq:concentration condition second moment} it suffices to show $ \E\big[\mathbf{E}_u[\mathcal{D}_{t_n}]^2\big] \leq(1+o(1))\,\E\big[\mathbf{E}_u[\mathcal{D}_{t_n}]\big]^2$, as the reversed inequality is trivially satisfied.
  Moreover, notice that we can prove the latter w.l.o.g. in the usual discrete-time setting. To this aim, we can upper bound the discrete-time version of \eqref{eq:second moment 1)} as follows
\begin{equation}
\begin{split}
        &\frac{(2u(1-u))^2}{m^2}\sum_{x,y\in[n]}\sum_{x',y'\neq x,y} \frac{d_x^+\,d_y^-}{m}\frac{d_{x'}^+\,d_{y'}^-}{m}\Big[1 - o(1) \\
        &-\sum_{s_1,s_2\leq t} \P^{2-{\rm an} \mid \chi} (\tau_{\rm meet}^{(x,y)} = 2s_1+1\,, \tau^{(x,y)}_{\rm dev} > \tau_{\rm meet}^{(x,y)}\,, \tau_{\rm meet}^{(x',y')}=2s_2+1\,,\tau^{(x',y')}_{\rm dev} > \tau_{\rm meet}^{(x',y')}, (\A_{s_1}^{x',y'})^c, \cF^c)\Big] \\
        &=\frac{(2u(1-u))^2}{m^2}\sum_{x,y\in[n]} \frac{d_x^+\,d_y^-}{m}\bigg[ 1-  2^{-1}\frac{1}{d^+_x} - \sum_{1\leq s_1\leq t}2^{-2s_1-1}C_{s_1-1}\frac{1}{d^+_x}\frac{1}{d^+_y} \rho^{s_1-1}\bigg] \\
        &\times\sum_{x',y'\neq x,y}\frac{d_{x'}^+\,d_{y'}^-}{m} \bigg[ 1-  2^{-1}\frac{1}{d^+_{x'}} - \sum_{1\leq s_2\leq t}2^{-2s_2-1}C_{s_2-1}\frac{1}{d^+_{x'}}\frac{1}{d^+_{y'}} \rho^{s_2-1}\bigg]- o(1) \\
        &\leq\frac{(2u(1-u))^2}{m^2}\sum_{x,y\in[n]} \frac{d_x^+\,d_y^-}{m}\bigg[ 1-  2^{-1}\frac{1}{d^+_x} - \sum_{1\leq s_1\leq t}2^{-2s_1-1}C_{s_1-1}\frac{1}{d^+_x}\frac{1}{d^+_y} \rho^{s_1-1}\bigg] \\
        &\times\sum_{x',y'\in[n]}\frac{d_{x'}^+\,d_{y'}^-}{m} \bigg[ 1-  2^{-1}\frac{1}{d^+_{x'}} - \sum_{1\leq s_2\leq t}2^{-2s_2-1}C_{s_2-1}\frac{1}{d^+_{x'}}\frac{1}{d^+_{y'}} \rho^{s_2-1}\bigg]- o(1) \\
        &\leq(1+o(1))\,\E\big[E_u[\mathcal{D}_{t}]\big]^2\,,
\end{split}
\end{equation}
where in the first inequality we used \eqref{eq:x'y'xy}, \eqref{eq:coupling second moment} and Lemma \ref{lemma:annealed meeting}. We conclude the proof by noticing that for any $\varepsilon>0$ it holds that
\begin{equation}
    \P\Big(\big|\mathbf{E}_u[\mathcal{D}_{t}]- \E\big[\mathbf{E}_u[\mathcal{D}_{t}]\big]\big|>\varepsilon\Big) \leq \frac{\mathbb{V}\text{ar}(\mathbf{E}_u[\mathcal{D}_{t}])}{\varepsilon^2} \overset{\eqref{eq:concentration condition second moment}}{=} o(1)\,.
\end{equation}

\end{proof}

\subsection{Long time scales}
In this section we extend the result shown in Proposition \ref{prop:convergence in probability short time scales} to time scales $t=t_n$ that are of any order up to linear, i.e. such that $\lim_{n\to \infty}\frac{t_n}{n}=\ell$, for some $\ell>0$. The following is an adaptation of the results in \cite{ABHHQ22} to our directed and inhomogeneous framework. Recall the definitions of deviation time $\tau_{\rm dev}$ in \eqref{eq:tau dev} and $\nu_{\rm dev}$ in \eqref{eq:nu dev}. We start by proving a preliminary result that will be the key step bridging the convergence of short and long time scales. It shows that the tail distribution of the first meeting time of two independent walks starting right after deviating, thus w.h.p. far apart in the sense of Proposition \ref{prop:meet_after_dev}, has an exponential decay decreasing linearly in the size of the graph. 

\begin{lemma} \label{lemma:exponential meet from dev}
It holds that
    \begin{equation} \label{eq:exponential tail from dev}
       \max_{(u,v)\in \supp{\nu_{\rm dev}}} \sup_{t\geq 0} \big|\mathbf{P}(\tau^{(u,v)}_{\rm meet}>t) - e^{-2\frac{t}{n}\vartheta^{-1}}\big| \overset{\P}{\longrightarrow} 0\,,
    \end{equation}
    where $\vartheta$ is as in \eqref{theta}.
\end{lemma}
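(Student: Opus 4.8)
The plan is to combine two facts: (i) by Proposition~\ref{prop:meet_after_dev}, after the deviation time the two walks are, with high probability, at distance $\Theta(\log^3(n))$ in the quenched metric (more precisely, $\tau_{\rm meet}^{(u,v)}>\log^3(n)$ w.h.p.\ uniformly over $(u,v)\in\supp\nu_{\rm dev}$); and (ii) the product chain of two independent random walks on the sparse DCM has mixing time of logarithmic order w.h.p. Together these say that, started from any $(u,v)\in\supp\nu_{\rm dev}$, the pair of walks mixes \emph{before} it has any appreciable chance of meeting, so that the conditional law of the pair at time, say, $\log^4(n)$ is within $o_\P(1)$ of the stationary product measure $\pi\otimes\pi$, and no meeting has occurred in $[0,\log^4(n)]$ with probability $1-o_\P(1)$. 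From that point on, the tail of the meeting time should be governed by Theorem~\ref{thm:meeting from stationarity}, which gives $\tau^{\pi\otimes\pi}_{\rm meet}/(\tfrac12\vartheta n)\Rightarrow\mathrm{Exp}(1)$ in Wasserstein distance w.h.p.

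Concretely I would proceed as follows. First, fix $(u,v)\in\supp\nu_{\rm dev}$ and an auxiliary time scale $r_n=\log^4(n)$ (any scale with $\log^3(n)\ll r_n\ll n$ works). Write, for $t\ge r_n$,
\begin{equation*}
\mathbf{P}(\tau^{(u,v)}_{\rm meet}>t) = \mathbf{P}\big(\tau^{(u,v)}_{\rm meet}>r_n\big)\cdot\mathbf{P}\big(\tau^{(u,v)}_{\rm meet}>t \,\big|\, \tau^{(u,v)}_{\rm meet}>r_n\big),
\end{equation*}
and argue that the first factor is $1-o_\P(1)$ uniformly in $(u,v)$ by Proposition~\ref{prop:meet_after_dev}. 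For the conditional factor, use the Markov property at time $r_n$: the conditional law of $(X_{r_n},Y_{r_n})$ is some measure $\mu_n$ on the "not-yet-met" configurations, and by the logarithmic mixing of the product chain, $\|\mu_n-\pi\otimes\pi\|_{\mathrm{TV}}=o_\P(1)$ uniformly over starting points. Hence $\mathbf{P}(\tau^{(u,v)}_{\rm meet}>t\mid\tau^{(u,v)}_{\rm meet}>r_n)$ differs from $\mathbf{P}_{\pi\otimes\pi}(\tau_{\rm meet}>t-r_n)$ by $o_\P(1)$, uniformly in $t$ and in $(u,v)$. Finally, Theorem~\ref{thm:meeting from stationarity} (via the Wasserstein bound, which controls the CDF up to an $o_\P(1)$ term and the exponential is Lipschitz in the rescaled variable) gives $\mathbf{P}_{\pi\otimes\pi}(\tau_{\rm meet}>t-r_n)=e^{-2(t-r_n)/(n\vartheta)}+o_\P(1)=e^{-2t/(n\vartheta)}+o_\P(1)$, the last step using $r_n/n\to 0$ so that $e^{2r_n/(n\vartheta)}\to 1$. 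One still has to check the regime $t<r_n$ separately, but there $e^{-2t/(n\vartheta)}=1-o(1)$ and $\mathbf{P}(\tau^{(u,v)}_{\rm meet}>t)\ge\mathbf{P}(\tau^{(u,v)}_{\rm meet}>r_n)=1-o_\P(1)$, so the difference is $o_\P(1)$ there as well; and the supremum over $t\ge 0$ is handled by splitting at $r_n$ and noting the bounds obtained are uniform in $t$ on each piece.

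The main obstacle is the \emph{uniformity}: the statement requires $\sup_{t\ge0}$ and $\max_{(u,v)\in\supp\nu_{\rm dev}}$ simultaneously, so every estimate above must be shown uniform over starting points in $\supp\nu_{\rm dev}$ and over $t$. Uniformity in $t$ for the stationary meeting tail is the delicate point, since Theorem~\ref{thm:meeting from stationarity} is stated as a Wasserstein-distance (hence CDF-level) convergence; one should extract from $d_W(\tau^{\pi\otimes\pi}_{\rm meet}/(\tfrac12\vartheta n),\mathrm{Exp}(1))\overset{\P}\to0$ a uniform-in-$t$ control of $|\mathbf{P}(\tau^{\pi\otimes\pi}_{\rm meet}>t)-e^{-2t/(n\vartheta)}|$, which works because the limiting exponential CDF is Lipschitz, so Wasserstein-$1$ convergence upgrades to Kolmogorov-distance convergence here. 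Uniformity over $(u,v)$ is cleaner: Proposition~\ref{prop:meet_after_dev} already provides it, and the mixing bound is uniform over all starting states by definition of the mixing time; one just needs to confirm that the "not-yet-met" conditioning does not spoil mixing, which follows because the conditioning event has probability $1-o_\P(1)$ on the relevant time scale. Assembling these pieces, together with the fact that $\supp\nu_{\rm dev}$ consists w.h.p.\ of pairs at distance $\ge\log^3(n)$, yields \eqref{eq:exponential tail from dev}.
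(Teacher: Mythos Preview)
Your approach is essentially the same as the paper's: split at an intermediate time after the product chain has mixed but before there is any appreciable chance of meeting, then invoke Theorem~\ref{thm:meeting from stationarity} for the stationary tail and Proposition~\ref{prop:meet_after_dev} for the pre-mixing window. The only quibble is your choice $r_n=\log^4(n)$, which overshoots the $\log^3(n)$ threshold that Proposition~\ref{prop:meet_after_dev} actually delivers; the paper instead takes $r_n=t_{\rm mix}^{\otimes 2}<\log^2(n)$ (citing \cite{BCS18,BCS19}), which sits safely below $\log^3(n)$ and otherwise runs identically to your argument.
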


\begin{proof}
The result follows from a modification of Lemma 3.8 in \cite{ABHHQ22}, replacing $\tau_{\rm far}$ with $\tau_{\rm dev}$, where in their setting $\tau_{\rm far}$ is the first time such that two walks are at distance at least $(\log\log(n))^2$. For completeness we will sketch the proof in our framework. 
    Let $(u,v)\in \supp{\nu_{\rm dev}}$, then Proposition \ref{prop:meet_after_dev} implies that $\forall t \leq \log^2(n)$ 
\begin{equation}
    \mathbf{P}(\tau_{\rm meet}^{(u,v)}<t) \overset{\P}{\longrightarrow} 0, \quad \text{as } n\to \infty\,.
\end{equation}
In particular 
\begin{equation} \label{eq:meet from support dev}
    \min_{(u,v)\in \supp{\nu_{\rm dev}}}  \mathbf{P}(\tau_{\rm meet}^{(u,v)}>t) \geq 1 - o_\P(1)\,.
\end{equation}
\newcommand{\ttwo}{t_{\rm mix}^{\otimes2}}
\newcommand{\pitwo}{\pi^{\otimes2}}
Let $\ttwo$ be the mixing time and $\pitwo:=\pi\otimes\pi$ the stationary distribution of the product chain $(X,Y)$ given by two independent random walks on $G$. It holds that $\ttwo< \log^2(n)$ with high probability, see \cite{BCS18}, \cite{BCS19}. Thus it is sufficient to prove the result for $t>\ttwo$. Under such assumption, it holds that,
\begin{equation}\label{eq:meeting from dev eq1}
\begin{split}
 \mathbf{P}(\tau_{\rm meet}^{(u,v)}>t) &= \sum_{\substack{x,y\in [n]\\ x\neq y}} \mathbf{P}(\tau_{\rm meet}^{(u,v)}>t, X_{\ttwo} = x, Y_{\ttwo} =y, \tau_{\rm meet}^{(u,v)}>\ttwo)  \\
 &= \sum_{\substack{x,y\in [n]\\ x\neq y}} \mathbf{P}(\tau_{\rm meet}^{(u,v)}>\ttwo, X_{\ttwo} = x, Y_{\ttwo} =y) \mathbf{P}(\tau_{\rm meet}^{(u,v)}>t-\ttwo) \\
 &\geq \sum_{\substack{x,y\in [n]\\ x\neq y}} \mathbf{P}(X_{\ttwo} = x, Y_{\ttwo} =y) \mathbf{P}(\tau_{\rm meet}^{(u,v)}>t-\ttwo) - \mathbf{P}(\tau_{\rm meet}^{(u,v)}\leq \ttwo) \\
 &\geq \sum_{\substack{x,y\in [n]\\ x\neq y}} \pitwo(x,y) \mathbf{P}(\tau_{\rm meet}^{(u,v)}>t-\ttwo) - o_\P(1)\,,
\end{split}
\end{equation}
where the last inequality is a consequence of \eqref{eq:meet from support dev} and the fact that the law of $(X,Y)$ can be approximated by its stationary measure up to a vanishing error. We conclude by replacing the desired exponential term in \eqref{eq:meeting from dev eq1} thanks to Theorem \ref{thm:meeting from stationarity}. The upper bound follows by a similar argument.
\end{proof}

\begin{proposition} \label{prop:convergence in probability long time scales}
    Suppose that the degree sequence satisfies Assumption \ref{deg-assumptionn}. Fix $u\in(0,1)$ and let $\eta_0=\text{Bern}(u)^{\otimes V}$. Then, for any non-negative sequence $t_n$ such that $\lim_{n\to \infty}t_n=\infty$ and $\lim_{n\to\infty}\frac{t_n}{n}=\ell\geq 0$, it holds that 
\begin{equation}
\bigg|\mathbf{E}_u[\mathcal{D}_{t_n}] - 2u(1-u)\,\varphi(\infty)\, e^{-2\ell\,\vartheta^{-1}}\bigg| \overset{\P}{\longrightarrow} 0\,,
\end{equation}
where $\varphi(\cdot)$ is as in \eqref{eq:phi} and $\vartheta$ as in \eqref{theta}.
\end{proposition}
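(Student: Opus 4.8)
The idea is to decompose each meeting-time tail according to whether the two walks \emph{deviate} (in the sense of $\tau_{\rm dev}$ from \eqref{eq:tau dev}) before or after they meet, and to treat the two pieces using Lemma \ref{lemma:exponential meet from dev} and the short-time analysis respectively. By duality, as in the proof of Proposition \ref{prop:first moment short time scales}, $\mathbf{E}_u[\mathcal{D}_{t_n}]=\frac{2u(1-u)}{m}\sum_{(x,y)\in E}\mathbf{P}(\tau_{\rm meet}^{(x,y)}>t_n)$, and I would work throughout with the discrete-time skeleton chain, passing to continuous time only at the end via \eqref{eq:continuous-time to discrete-time}. Fix $(x,y)\in E$ with $x\to y$ and split
\begin{equation*}
\mathbf{P}(\tau_{\rm meet}^{(x,y)}>t_n)=\mathbf{P}\big(\tau_{\rm meet}^{(x,y)}>t_n,\,\tau_{\rm meet}^{(x,y)}\le\tau_{\rm dev}^{(x,y)}\big)+\mathbf{P}\big(\tau_{\rm meet}^{(x,y)}>t_n,\,\tau_{\rm dev}^{(x,y)}<\tau_{\rm meet}^{(x,y)}\big).
\end{equation*}

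The first term is $o_\P(1)$, uniformly in $(x,y)$: by Corollary \ref{coro:tau_bar=tau_dev} the event $\{\tau_{\rm meet}^{(x,y)}\le\tau_{\rm dev}^{(x,y)}\}$ coincides w.h.p.\ with $\{\tau_{\rm meet}^{(x,y)}\le\bar\tau^{(x,y)}\}$, on which $X$ must follow the simple path already traced by $Y$ (out-edges only, locally a tree), so $\{\tau_{\rm meet}^{(x,y)}>t_n,\,\tau_{\rm meet}^{(x,y)}\le\bar\tau^{(x,y)}\}\subseteq\{\bar\tau^{(x,y)}>t_n\}$; the binomial bound from the proof of Lemma \ref{lemma:tau-bar small} then yields $\mathbf{P}(\bar\tau^{(x,y)}>t_n)\le(d^+_{\min})^{-t_n/3}+\Pr(\mathrm{Bin}(t_n,\tfrac12)<t_n/3)\to0$ since $d^+_{\min}\ge2$ and $t_n\to\infty$.

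For the second term I would apply the strong Markov property of the pair chain $(X,Y)$ at $\tau_{\rm dev}^{(x,y)}$: on $\{\tau_{\rm dev}^{(x,y)}<\tau_{\rm meet}^{(x,y)}\}$ one has $(X_{\tau_{\rm dev}},Y_{\tau_{\rm dev}})\in\supp\nu_{\rm dev}$, hence
\begin{equation*}
\mathbf{P}\big(\tau_{\rm meet}^{(x,y)}>t_n\mid(X_s,Y_s)_{s\le\tau_{\rm dev}}\big)=\mathbf{P}_{(X_{\tau_{\rm dev}},Y_{\tau_{\rm dev}})}\big(\tau_{\rm meet}>t_n-\tau_{\rm dev}^{(x,y)}\big),
\end{equation*}
and Lemma \ref{lemma:exponential meet from dev}, being uniform over $\supp\nu_{\rm dev}$ and over all horizons, replaces the right-hand side by $e^{-2(t_n-\tau_{\rm dev}^{(x,y)})/n\,\vartheta^{-1}}+o_\P(1)$. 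Since $\bar\tau^{(x,y)}\le\log^2 n$ w.h.p.\ (Lemma \ref{lemma:tau-bar small}) and $\bar\tau^{(x,y)}=\tau_{\rm dev}^{(x,y)}$ w.h.p., on that high-probability event $\tau_{\rm dev}^{(x,y)}/n\to0$, so with $t_n/n\to\ell$ and $\vartheta=\vartheta_n$ bounded away from $0$ and $\infty$ the exponential equals $e^{-2\ell\vartheta^{-1}}+o(1)$, while the complementary event contributes $o_\P(1)$. Taking expectations,
\begin{equation*}
\mathbf{P}\big(\tau_{\rm meet}^{(x,y)}>t_n,\,\tau_{\rm dev}^{(x,y)}<\tau_{\rm meet}^{(x,y)}\big)=\mathbf{P}\big(\tau_{\rm dev}^{(x,y)}<\tau_{\rm meet}^{(x,y)}\big)\,e^{-2\ell\vartheta^{-1}}+o_\P(1).
\end{equation*}

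To finish I must identify $\frac1m\sum_{(x,y)\in E}\mathbf{P}(\tau_{\rm dev}^{(x,y)}<\tau_{\rm meet}^{(x,y)})$ with $\varphi(\infty)$, which I would do by re-running the same split with $t_n$ replaced by $s_n\coloneqq\log^2 n$ ($s_n\to\infty$, $s_n=o(\log^3 n)$): the first term is again $o_\P(1)$ by Lemma \ref{lemma:tau-bar small}, and by Proposition \ref{prop:meet_after_dev} two deviated walks do not meet within $\log^3 n$ steps, so $\mathbf{P}(\tau_{\rm meet}^{(x,y)}>s_n,\,\tau_{\rm dev}^{(x,y)}<\tau_{\rm meet}^{(x,y)})=\mathbf{P}(\tau_{\rm dev}^{(x,y)}<\tau_{\rm meet}^{(x,y)})+o_\P(1)$; thus
\begin{equation*}
\frac1m\sum_{(x,y)\in E}\mathbf{P}(\tau_{\rm dev}^{(x,y)}<\tau_{\rm meet}^{(x,y)})=\frac1m\sum_{(x,y)\in E}\mathbf{P}(\tau_{\rm meet}^{(x,y)}>s_n)+o_\P(1)=\frac{\mathbf{E}_u[\mathcal{D}_{s_n}]}{2u(1-u)}+o_\P(1),
\end{equation*}
which by Proposition \ref{prop:convergence in probability short time scales} and $\varphi(s_n)\to\varphi(\infty)$ equals $\varphi(\infty)+o_\P(1)$. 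Combining the three displays, averaging over $(x,y)\in E$ and Poissonizing gives the statement. The step I expect to be most delicate is the uniformity in $(x,y)\in E$ of all the $o_\P(1)$ terms across the nested decompositions: each input is stated as a maximum over starting pairs or over $\supp\nu_{\rm dev}$, and one has to check that the strong-Markov conditioning keeps the post-deviation pair in $\supp\nu_{\rm dev}$ and that the accumulated errors still vanish after the average $\frac1m\sum_{(x,y)\in E}(\cdot)$ — which is legitimate because each summand is $\Theta(1)$ and $m=\Theta(n)$.
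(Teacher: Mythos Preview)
Your proposal is correct and follows essentially the same route as the paper: split the meeting-time tail according to whether $\tau_{\rm dev}$ or $\tau_{\rm meet}$ comes first, apply the strong Markov property at $\tau_{\rm dev}$ together with Lemma~\ref{lemma:exponential meet from dev} to extract the factor $e^{-2\ell\vartheta^{-1}}$, and identify the remaining mass $\mathbf{P}(\tau_{\rm dev}<\tau_{\rm meet})$ with $\varphi(\infty)$ via the short-time result (Proposition~\ref{prop:convergence in probability short time scales}) evaluated at an intermediate scale $s_n=\log^2 n$. The paper packages the same ingredients slightly differently, introducing $\sigma=\tau_{\rm meet}\wedge\tau_{\rm dev}$ and an explicit cutoff $h_\star=o(\log^3 n)$ to localize $\tau_{\rm dev}$ before invoking Lemma~\ref{lemma:exponential meet from dev}, but the logic is the same; one minor simplification is that the paper stays in continuous time throughout (Lemma~\ref{lemma:exponential meet from dev} is already continuous-time), so no final Poissonization is needed.
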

\begin{proof}
Let $h_\star=h_{\star,n}$ be a diverging sequence such that $h_\star=o(\log^3(n))$. It is enough to prove the result for $t=t_n>h_\star$, as for the complementary case it immediately follows by Proposition \ref{prop:convergence in probability short time scales}. Fix $x,y\in[n]$, and let
\begin{equation}
    \sigma = \sigma^{(x,y)} := \tau_{\rm meet}^{(x,y)} \wedge \tau_{\rm dev}^{(x,y)}\,.
\end{equation}
Recall  
\begin{equation} \label{eq:expected discordant edges1}
    \mathbf{E}_u[\mathcal{D}_{t_n}] =  \frac{2u(1-u)}{m}\sum_{x,y\in[n]} \mathbf{P}(\tau_{\rm meet}^{(x,y)}>t)\,\mathds{1}_{(x,y)\in E}\,,
\end{equation}
and, as a consequence of Lemma \ref{lemma:tau-bar small} and Corollary \ref{coro:tau_bar=tau_dev}, that
\begin{equation} \label{eq:tau dev small}
    \mathbf{P}(\tau_{\rm dev}^{(x,y)} > h_\star) \overset{\P}{\longrightarrow} 0\,.
\end{equation}
Therefore
\begin{equation}
    \mathbf{P}(\tau_{\rm meet}>t) =  \mathbf{P}(\tau_{\rm meet}>t,\, \sigma = \tau_{\rm dev}^{(x,y)},\, \tau_{\rm dev}^{(x,y)} \leq  h_\star) + o_\P(1)\,.
\end{equation}
It follows that
\begin{equation} \label{eq: equation 1}
\begin{split}
     &\mathbf{P}(\tau^{(x,y)}_{\rm meet}>t,\, \sigma = \tau_{\rm dev}^{(x,y)},\, \tau_{\rm dev}^{(x,y)} \leq  h_\star) \\
     &= \sum_{(u,v)\in \text{supp}(\nu_{\text{dev}})} \sum_{s\leq h_\star} \mathbf{P}(\tau^{(x,y)}_{\rm meet}>t\mid \sigma = \tau_{\rm dev}^{(x,y)},\, \tau_{\rm dev}^{(x,y)} = s,\, (X_\sigma,Y_\sigma)=(u,v)) \\
     &\times  \mathbf{P}(\sigma = \tau_{\rm dev}^{(x,y)},\, \tau_{\rm dev}^{(x,y)} =s,\, (X_\sigma,Y_\sigma)=(u,v))\\
     &= \sum_{(u,v)\in \text{supp}(\nu_{\rm dev})}  \sum_{s\leq h_\star}\mathbf{P}(\tau^{(u,v)}_{\rm meet}>t - s) \mathbf{P}(\sigma = \tau_{\rm dev}^{(x,y)},\, \tau_{\rm dev}^{(x,y)} = s,\, (X_\sigma,Y_\sigma)=(u,v))\\
      &=  e^{-2\frac{t}{n}\vartheta^{-1}}\sum_{(u,v)\in \text{supp}(\nu_{\rm dev})}  \sum_{s\leq h_\star}\mathbf{P}(\sigma = \tau_{\rm dev}^{(x,y)},\, \tau_{\rm dev}^{(x,y)} = s,\, (X_\sigma,Y_\sigma)=(u,v)) e^{-2\frac{s}{n}\vartheta^{-1}}\\
     &=  e^{-2\ell\vartheta^{-1}}\,
     \mathbf{P}(\sigma = \tau_{\rm dev}^{(x,y)},\, \tau_{\rm dev}^{(x,y)} \leq h_\star) + o_\P(1)\,
\end{split}
\end{equation}
as in the last equality we applied Lemma \ref{lemma:exponential meet from dev}. Finally
\begin{equation} \label{eq: equation 2}
\begin{split}
    \mathbf{P}(\sigma = \tau_{\rm dev}^{(x,y)},\, \tau_{\rm dev}^{(x,y)} \leq h_\star) &= \mathbf{P}(\tau_{\rm dev}^{(x,y)} < \tau_{\rm meet}^{(x,y)},\, \tau_{\rm dev}^{(x,y)} \leq h_\star) \\
    &= \mathbf{P}( \tau_{\rm dev}^{(x,y)} \leq h_\star ) -  \mathbf{P}(\tau_{\rm dev}^{(x,y)} \geq \tau_{\rm meet}^{(x,y)},\, \tau_{\rm dev}^{(x,y)} \leq h_\star) \\
    &\overset{\eqref{eq:tau dev small}}{=} 1- o_\P(1) -\mathbf{P}(\tau_{\rm dev}^{(x,y)} \geq \tau_{\rm meet}^{(x,y)},\, \tau_{\rm dev}^{(x,y)} \leq h_\star,\, \tau_{\rm meet}^{(x,y)}\leq h_\star) \\
    &= 1- o_\P(1) -\mathbf{P}(\tau_{\rm dev}^{(x,y)} \geq \tau_{\rm meet}^{(x,y)},\, \tau_{\rm meet}^{(x,y)}\leq h_\star) 
\end{split}
\end{equation}
If we now plug \eqref{eq: equation 1} and \eqref{eq: equation 2} into \eqref{eq:expected discordant edges1} we deduce that
\begin{equation}
\begin{split}
     \mathbf{E}_u[\mathcal{D}_{t_n}] &=  2u(1-u)\,e^{-2\frac{t}{n}\vartheta^{-1}}\,\frac{1}{m}\sum_{x,y\in[n]} \Big[1- o_\P(1) -\mathbf{P}(\tau_{\rm dev}^{(x,y)} \geq \tau_{\rm meet}^{(x,y)},\, \tau_{\rm meet}^{(x,y)}\leq h_\star) \Big]\,\mathds{1}_{(x,y)\in E}\\   
     &= 2u(1-u)\,\varphi(h_\star)\,e^{-2\ell\vartheta^{-1}} + o_\P(1)\,,
\end{split}
\end{equation}
where we exploited Proposition \ref{prop:convergence in probability short time scales} and the proof of Proposition \ref{prop:first moment short time scales}. We conclude by approximating $\varphi(h_\star)$ with $\varphi(\infty)$.
\end{proof}

\begin{proof}[Proof of Theorem \ref{thm:main}]
The result follows by Proposition \ref{prop:convergence in probability short time scales} for time scales $t_n=o(\log^3(n))$ while Proposition \ref{prop:convergence in probability long time scales} extends it to all the remaining ones. 
    
\end{proof}

\subsection*{Acknowledgements}
The work of F.C. is supported in part by the Netherlands Organisation for Scientific Research (NWO) through the Gravitation {\sc Networks} grant 024.002.003. The work of F.C. is further supported by the European Union's Horizon 2020 research and innovation programme under the Marie Sk\l odowska-Curie grant agreement no.\ 945045. The author thanks Luca Avena, Rajat Subhra Hazra, and Matteo Quattropani for their useful discussions and helpful suggestions.\hfill
\parbox{0.1\textwidth}
{~~~~\includegraphics[width=0.05\textwidth]{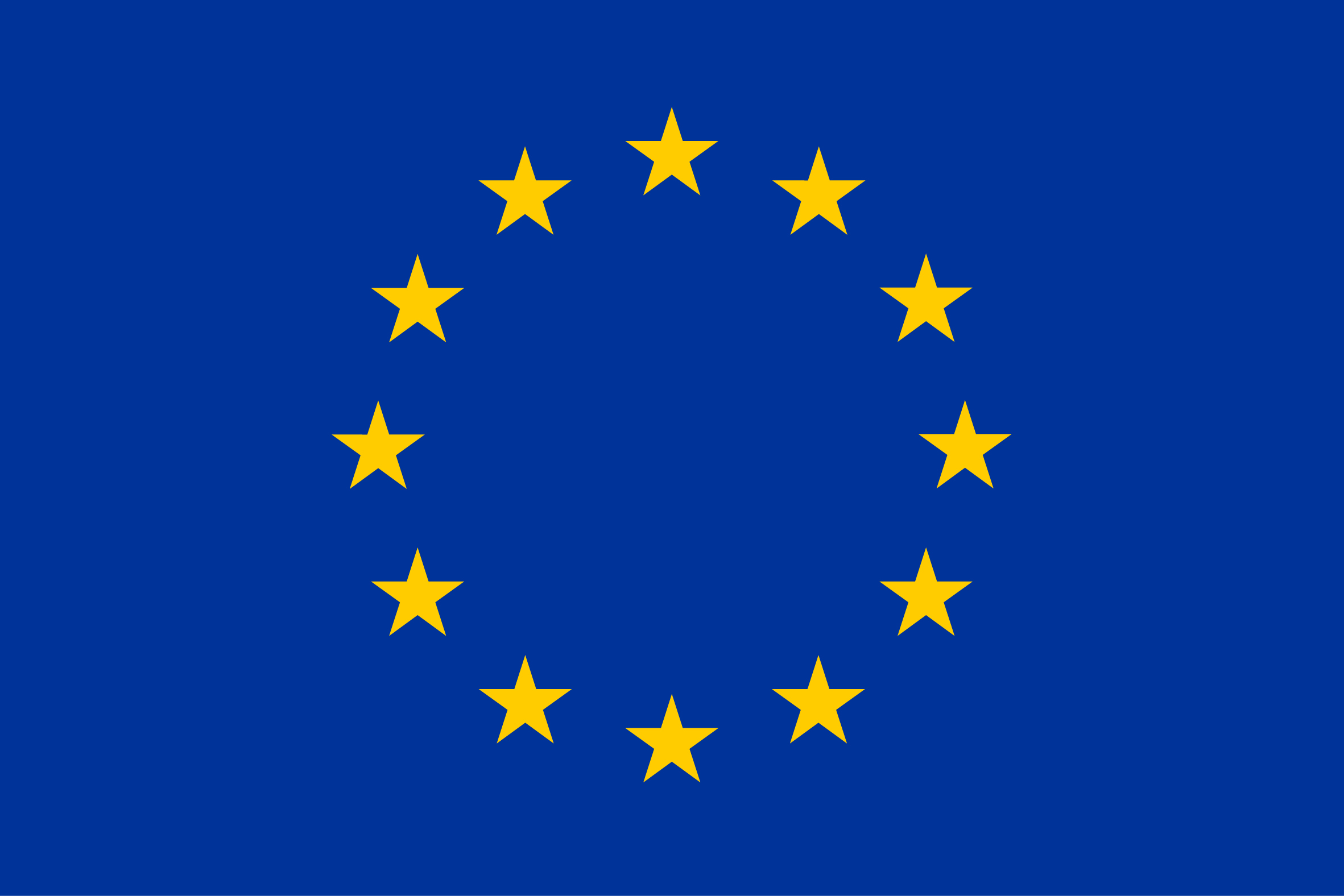}}

\bibliographystyle{abbrvnat}
\bibliography{reference.bib}
\end{document}